\title{Odd partitions in Young's lattice}
\author{Arvind Ayyer}
\address{AA: Department of Mathematics, Indian Institute of Science, Bengaluru 560012, India.}
\email{arvind@math.iisc.ernet.in}
\author{Amritanshu Prasad}
\address{AP: The Institute of Mathematical Sciences, CIT campus, Taramani Chennai 600113, India.}
\email{amri@imsc.res.in}
\author{Steven Spallone}
\address{SS: Indian Institute of Science Education and Research, Pashan, Pune 411008, India.}
\email{sspallone@iiserpune.ac.in}
\keywords{Partitions, Young's lattice, cores, quotients, hooks, odd-dimensional representations, symmetric groups, Macdonald tree, differential posets}
\subjclass[2010]{05A15, 05A17, 05E10, 20C30}
\newtheorem{prop}{Proposition}
\newtheorem{lemma}{Lemma}
\newtheorem{theorem}{Theorem}
\newcommand{\core}[2]{\corem_{#2}(#1)}
\newcommand{\quot}[2]{\quo_{#2}(#1)}
\DeclareMathOperator{\quo}{quo}
\DeclareMathOperator{\corem}{core}
\DeclareMathOperator{\hand}{hand}
\DeclareMathOperator{\foot}{foot}
\begin{document}
\begin{abstract}
  We show that the subgraph induced in Young's graph by the set of partitions with an odd number of standard Young tableaux is a binary tree.
  This tree exhibits self-similarities at all scales, and has a simple recursive description.
\end{abstract}
\maketitle

\thispagestyle{myheadings}
\font\rms=cmr8 
\font\its=cmti8 
\font\bfs=cmbx8

\markright{\its S\'eminaire Lotharingien de
Combinatoire \bfs 75 \rms (2016), Article~B75g\hfill}
\def\thepage{}

\section{Introduction}
\label{sec:introduction}
Young's lattice is the set $\Lambda$ of integer partitions, partially ordered by containment of Young diagrams.
It has a unique minimal element $\emptyset$, the trivial partition of $0$.
Its Hasse diagram is known as Young's graph.
For each $\lambda\in \Lambda$, let $f_\lambda$ denote the number of saturated chains from $\emptyset$ to  $\lambda$.
This number $f_\lambda$ is also the number of standard tableaux of shape $\lambda$, and the dimension of the irreducible representation of the symmetric group associated with $\lambda$.
It can be computed by the hook-length formula of Frame, Robinson and Thrall \cite[Theorem~1]{frame1954hook}.

Let $\Lambda^{\mathrm{odd}}$ denote the subset of partitions $\lambda\in \Lambda$ for which $f_\lambda$ is odd.
The partitions in $\Lambda^{\mathrm{odd}}$ will be referred to as \emph{odd partitions}.
Macdonald \cite{macdonald1971degrees} has shown that the number of odd partitions of $n$ is $2^{\alpha(n)}$, where,  if $n$ has binary expansion $n = a_0 + 2a_1 + 2^2a_2 + 2^3a_3 +\dotsb$, with $a_i\in \{0, 1\}$, then
\begin{displaymath}
  \alpha(n) = a_1 + 2a_2 + 3a_3 + \dotsb.
\end{displaymath}
In this article, we show (Theorems~\ref{theorem:unique-parent} and~\ref{theorem:children}) that the subgraph induced in Young's graph by $\Lambda^{\mathrm{odd}}$ is an incomplete binary tree (Figure~\ref{fig:macdonald-tree}).
We call this tree the \emph{Macdonald tree}.
For each $\lambda \in \Lambda^{\mathrm{odd}}$, we determine the number of branches of $\lambda$ in this tree.
This tree has self-similarities at all scales (Lemma~\ref{lemma:fractal}), only two infinite rays (Theorem~\ref{theorem:ray}) and a simple recursive description (Section~\ref{sec:recursive}).
Hook-shaped partitions form an order ideal of Young's lattice.
When each partition $\lambda$ in the Hasse diagram of this ideal is replaced by $f_\lambda$, Pascal's triangle is obtained.
The intersection of this ideal with $\Lambda^{\mathrm{odd}}$ is the tree of odd binomial coefficients in Pascal's triangle (see Section~\ref{sec:pascal}).
\begin{figure}
  \centering
  \includegraphics[scale=0.67]{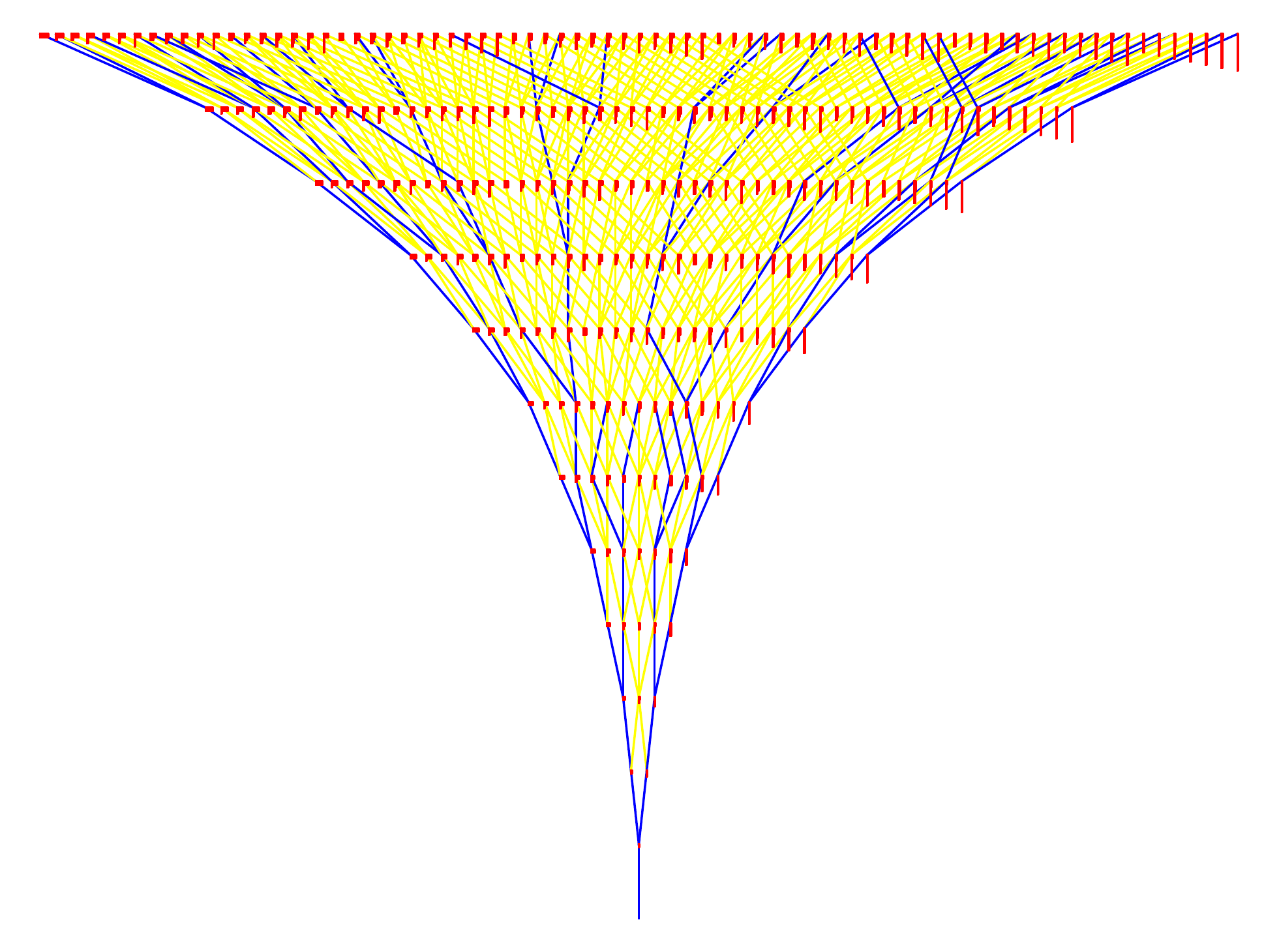}
  \caption{The Macdonald tree (blue edges) in Young's graph}
  \label{fig:macdonald-tree}
\end{figure}

{\bf Note added in proof:} While this paper was being reviewed, its results were applied by Gianelli, Kleshchev, Navarro and Tiep \cite{gagp} to construct an explicit McKay correspondence for symmetric groups.

\section{Preliminary Results}
\label{sec:preliminary-results}
Our first lemma is based on the theory of cores and quotients of partitions.
An exposition of this theory, including definitions concerning hooks and their anatomy, can be found in \cite[Section~2.7]{2009JamesKerber}.
For a partition $\lambda$ and an integer $p>1$, let $\core\lambda p$ denote the $p$-core of $\lambda$ and $\quot\lambda p$ denote the $p$-quotient.
By definition, $\core\lambda p$ is a partition with no hook-length divisible by $p$, and $\quo_p(\lambda)$ is a family $(\mu^0,\dotsc,\mu^{p-1})$ of partitions such that $|\lambda| = |\core\lambda p| + p(|\mu^0| + \dotsb + |\mu^{p-1}|)$.
\begin{lemma}
  \label{lemma:unique_hook}
  Suppose that $2^k \leq n <2^{k+1}$, and $\lambda$ is a partition of $n$.
  Then $\lambda$ is odd if and only if $\lambda$ has a unique $2^k$-hook, and $\core\lambda{2^k}$ is odd.
  Moreover, for each odd partition $\mu$ of $n-2^k$, there are $2^k$ odd partitions $\lambda$ of $n$ with $\core\lambda{2^k}=\mu$.
\end{lemma}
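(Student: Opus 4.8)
The plan is to translate everything into the language of cores, quotients and the abacus, and then to control the $2$-adic valuation $v_2(f_\lambda)$ via the Frobenius determinantal form of the hook-length formula. Write $m = |\mu^0| + \cdots + |\mu^{2^k-1}|$ for the total size of $\quot\lambda{2^k}$ and set $\kappa = \core\lambda{2^k}$. First I would dispose of the size bookkeeping: the identity $n = |\kappa| + 2^k m$ together with $n < 2^{k+1}$ forces $m \in \{0,1\}$. By the standard bijection between the cells of $\lambda$ with hook length divisible by $2^k$ and the cells of the quotient (James--Kerber, Section~2.7), and since the only multiple of $2^k$ in $[1,n]$ is $2^k$ itself, the number of $2^k$-hooks of $\lambda$ is exactly $m$; hence $\lambda$ has a unique $2^k$-hook iff $m = 1$, in which case $|\kappa| = n - 2^k$. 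On the abacus with $2^k$ runners, a $2^k$-core is the configuration in which every runner is packed at the bottom, and the partitions $\lambda$ with $m = 1$ and prescribed core $\kappa$ are obtained by raising the topmost bead of exactly one runner by one level; there are $2^k$ choices of runner, each giving a distinct $\lambda$ (equivalently, $2^k$ ways to place the single box of the quotient). Since $n - 2^k < 2^k$, every partition of $n - 2^k$ is automatically a $2^k$-core, so $\kappa$ ranges over all of them. This already yields the count of $2^k$ preimages asserted in the last sentence, once their oddness is established.

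The heart of the argument is the parity comparison when $m = 1$. Fixing a common number $r$ of beads, let $\{\beta_1 > \cdots > \beta_r\}$ be the beta-set of $\kappa$ and recall the Frobenius formula
\begin{equation*}
  f_\nu = \frac{|\nu|!\,\prod_{i<j}(\beta_i - \beta_j)}{\prod_i \beta_i!}.
\end{equation*}
Passing from $\kappa$ to $\lambda$ replaces one beta-number $a$ by $b = a + 2^k$ on its runner, where $a$ is the top bead of that runner. Taking $v_2$ of $f_\lambda / f_\kappa$ splits the change into three pieces: the factor $v_2\bigl(n!/(n-2^k)!\bigr)$, the denominator change $v_2(b!) - v_2(a!)$, and the Vandermonde change $\sum_{c}\bigl(v_2(b-c) - v_2(a-c)\bigr)$ over the remaining beads $c$. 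Two observations collapse this. For $c$ off the runner of $a$ one has $v_2(a-c) < k$, so adding $2^k$ leaves the valuation unchanged and the term vanishes; only beads on the same runner survive. For those, writing $a - c = s\cdot 2^k$ with $s = 1, \dots, t$ (the runner being packed since $\kappa$ is a core), the surviving sum telescopes to $v_2(t+1)$. A short count shows that $n!/(n-2^k)!$ and $b!/a!$ are each products of $2^k$ consecutive integers whose unique multiple of $2^k$ is $2^k$, respectively $(t+1)2^k$, so the three pieces are $2^k-1$, $(2^k-1)+v_2(t+1)$ and $v_2(t+1)$, and they cancel exactly. Hence $v_2(f_\lambda) = v_2(f_\kappa)$, so when $m = 1$ the partition $\lambda$ is odd iff its $2^k$-core is odd.

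It remains to rule out odd partitions with $m = 0$, and here I would lean on Macdonald's enumeration quoted in the introduction. Since the top binary digit of $n$ sits in position $k$, we have $\alpha(n) = k + \alpha(n-2^k)$, so $2^{\alpha(n)} = 2^k \cdot 2^{\alpha(n-2^k)}$. By the previous paragraph and the count of $2^k$ preimages, each of the $2^{\alpha(n-2^k)}$ odd partitions $\mu$ of $n - 2^k$ produces exactly $2^k$ odd partitions $\lambda$ of $n$, all with $m = 1$ and $\core\lambda{2^k} = \mu$, pairwise distinct. This already accounts for $2^{\alpha(n)}$ odd partitions of $n$, the total number by Macdonald; consequently no odd partition of $n$ can have $m = 0$. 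Assembling: an odd $\lambda$ must have $m = 1$ (a unique $2^k$-hook) with odd core, and conversely; and for each odd $\mu$ of $n - 2^k$ the $2^k$ preimages are precisely the odd partitions $\lambda$ with that core.

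The main obstacle is the exact cancellation in the second paragraph: one must verify that the off-runner Vandermonde terms are inert under adding $2^k$ and that the on-runner contribution telescopes to precisely the discrepancy coming from the two blocks of $2^k$ consecutive integers. If one prefers to avoid invoking Macdonald's count, the case $m = 0$ can instead be treated directly by the same Frobenius computation, showing $v_2(f_\lambda) \geq 1$ for a $2^k$-core of size at least $2^k$; I expect that to be the more delicate route, which is why I would route the $m=0$ case through the enumeration.
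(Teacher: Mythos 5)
Your proposal is correct, but it takes a genuinely different route from the paper's. The paper proves the lemma by induction on $k$ up the binary tower of quotients: Lemma~\ref{sec:recurse-f} (hook-length formula, Legendre's $v_2(n!)=n-\nu(n)$, and the Frame--Robinson--Thrall bijection of Lemma~\ref{sec:macd-lemma-1}) characterizes oddness of $\lambda$ through its $2$-core and $2$-quotient, and the compatibility relations \eqref{eq:1}--\eqref{eq:2} transport that characterization between $\lambda$ and $\core\lambda{2^k}$; in particular, the digit-disjointness condition in Lemma~\ref{sec:recurse-f} is what rules out odd partitions with no $2^k$-hook, so the whole argument is self-contained. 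You instead compute directly on the abacus with the Frobenius beta-number formula, and your computation checks out: the off-runner Vandermonde factors are inert because $v_2(a-c)<k$, the on-runner sum telescopes to $v_2(t+1)$, and the two factorial windows contribute $2^k-1$ and $(2^k-1)+v_2(t+1)$, whence $v_2(f_\lambda)=v_2\bigl(f_{\core\lambda{2^k}}\bigr)$ whenever $\lambda$ has a unique $2^k$-hook and $2^k\le n<2^{k+1}$. This is strictly more than the paper extracts (exact equality of $2$-adic valuations, not just of parities), and it settles the ``if'' direction and the count of $2^k$ odd preimages in one stroke, since any partition of $n$ whose $2^k$-core has size $n-2^k$ automatically has quotient of total size $1$. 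The trade-off is the ``only if'' direction: to exclude odd partitions that are themselves $2^k$-cores you appeal to Macdonald's enumeration $|\Lambda^{\mathrm{odd}}_n|=2^{\alpha(n)}$. That is not circular---Macdonald's theorem is proved independently of this lemma, and the paper itself invokes it later in Proposition~\ref{prop:counting-by-core}---but it does make your proof rest on an external enumeration precisely where the paper's induction needs nothing, which matters here because Section~\ref{sec:append-result-from} is explicitly advertised as a self-contained proof. Your suggested repair for the $m=0$ case (a direct valuation bound showing a $2^k$-core of size at least $2^k$ is even) is, in effect, what Lemma~\ref{sec:recurse-f} plus the induction on $k$ accomplishes, so a fully self-contained version of your argument would end up re-importing the paper's inductive step for that one direction.
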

This lemma follows from the discussion in \cite[Section~6]{olsson}.
A self-contained proof is given in Section~\ref{sec:append-result-from} of this article.

\pagenumbering{arabic}
\addtocounter{page}{1}
\markboth{\SMALL ARVIND AYYER, AMRITANSHU PRASAD, AND STEVEN SPALLONE}{\SMALL 
ODD PARTITIONS IN YOUNG'S LATTICE}

If $\lambda$ covers $\mu$ in Young's lattice, we write $\lambda\in \mu^+$, or $\mu\in \lambda^-$.
\begin{lemma}[\sc Main Lemma]
  \label{lemma:hook-removal}
  Suppose that $\lambda$ and $\mu$ are odd partitions with $\mu\in \lambda^-$.
  Assume that $2^k\leq |\mu|$, and $|\lambda|<2^{k+1}$.
  \begin{enumerate}
  \item We have $\core\mu{2^k} \in \core\lambda{2^k}^-$.
  \item
    Let $r$ be the rim of the unique $2^k$-hook of $\lambda$, and $s$ be the rim of the unique $2^k$-hook of $\mu$ (see Lemma~\ref{lemma:unique_hook}).
    Let $c$ be the unique cell of $\lambda$ that is not in $\mu$,
    and $c'$ be the unique cell of $\core\lambda{2^k}$ which is not in $\core\mu{2^k}$, then exactly one of the following holds:
    \begin{enumerate}
    \item $c$ has no neighbour in $r$, $r = s$, and $c' = c$,
    \item $c = \hand(r) = \hand(s)^E$, $c' = \foot(r)^W = \foot(s)$.
    \item $c = \foot(r) = \foot(s)^S$, $c' = \hand(r)^N = \hand(s)$.
    \item $\{c^N, c^W\}\subset r\cap s$, $c'=c^{NW}$.
    \end{enumerate}
    In \emph{(b)-(d)}, $s$ is obtained from $r$ by removing $c$ and adding $c'$.
  \end{enumerate}
\end{lemma}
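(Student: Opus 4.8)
The plan is to exploit the combinatorial dictionary between a partition and its $2^k$-core together with the bijective data carried by the $2^k$-quotient, using the fact (from Lemma~\ref{lemma:unique_hook}) that both $\lambda$ and $\mu$ have a \emph{unique} $2^k$-hook. First I would recall the standard encoding of a partition by its beta-set / abacus with $2^k$ runners: removing the unique $2^k$-hook corresponds to sliding a single bead one position up on its runner, and the resulting bead configuration reads off $\core\lambda{2^k}$. Since $\lambda$ and $\mu$ each have exactly one $2^k$-hook and $\mu\in\lambda^-$, the two abaci differ in exactly one bead position (the bead that encodes the cell $c$). The key observation is that sliding the unique bead on \emph{both} abaci (to pass from $\lambda$ to $\core\lambda{2^k}$ and from $\mu$ to $\core\mu{2^k}$) commutes appropriately with the single-box move $\lambda\to\mu$, and this yields part~(1): $\core\mu{2^k}$ is obtained from $\core\lambda{2^k}$ by removing exactly one box, so $\core\mu{2^k}\in\core\lambda{2^k}^-$. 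I would verify that the removed box is genuinely a corner (i.e. that $\core\mu{2^k}$ is a valid partition contained in $\core\lambda{2^k}$) by checking the bead-order constraints on the abacus.

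For part~(2) the strategy is a careful local analysis of the relative positions, on the Young diagram, of the removed cell $c$ and the two rims $r$ and $s$. I would set up coordinates using the hand and foot of each $2^k$-hook: the rim $r$ is the connected rim path of the unique $2^k$-hook of $\lambda$ running from $\hand(r)$ to $\foot(r)$, and similarly for $s$ in $\mu$. The cases (a)--(d) are governed by whether the removed corner $c$ lies off the rim, coincides with the hand of $r$, coincides with the foot of $r$, or lies strictly interior to the rim (so that both its north and west neighbours are on the rim). I would argue by a direct but finite case check: since $\mu=\lambda\setminus\{c\}$ is still a partition with its own unique $2^k$-hook, the rim $s$ of $\mu$'s hook can only differ from $r$ in a bounded neighbourhood of $c$, and the four listed cases exhaust the ways this can happen. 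In each case I would track $c'$, the unique box of $\core\lambda{2^k}$ not in $\core\mu{2^k}$, by following the bead-slide through the abacus-to-core translation, and confirm the stated relations such as $c'=c^{NW}$ in case (d) or $c=\hand(r)=\hand(s)^E$ in case (b).

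The main obstacle I expect is the bookkeeping in part~(2): correctly relating positions in the rim $r$ of $\lambda$ to positions in the core $\core\lambda{2^k}$ under the quotient map, and then matching these against the analogous data for $\mu$ so that the compass-direction identities ($\hand$, $\foot$, and the $N,S,E,W,NW$ shifts) come out exactly as stated rather than off by one cell. The delicate point is that removing $c$ from $\lambda$ can, in cases (b)--(d), alter the \emph{shape} of the rim (not merely shorten it), so $s$ is \emph{not} simply $r$ minus a cell but rather $r$ with $c$ deleted and $c'$ inserted, as asserted in the final sentence of the statement. To handle this cleanly I would phrase the rim as a lattice path and observe that a single corner removal either truncates an endpoint of the path (cases (b),(c)), removes an interior convex corner and inserts the opposite concave corner (case (d)), or does not meet the path at all (case (a)); verifying that these path-level moves correspond under the quotient bijection to the single-box difference $c'$ in the core is the crux of the argument. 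Once the abacus translation is fixed, I expect each of the four cases to reduce to a short, essentially mechanical verification.
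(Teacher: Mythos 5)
Your framework (unique hooks from Lemma~\ref{lemma:unique_hook}, abacus bookkeeping, local case analysis of the rim) is reasonable, but the proposal assumes precisely the point that needs proof. The two central assertions --- that sliding the unique hook bead ``commutes appropriately'' with the single-box move $\lambda\to\mu$, and that $s$ ``can only differ from $r$ in a bounded neighbourhood of $c$'' --- are not formal consequences of the facts you actually invoke, namely that $\lambda$ and $\mu$ each have a unique $2^k$-hook and that $\mu\in\lambda^-$. Concretely, take $2^k=2$, $\lambda=(4,3,1)$ and $\mu=(4,2,1)$, so $c=(2,3)$. Each partition has a unique $2$-hook and the two abaci differ in exactly one bead, yet the hook of $\lambda$ has rim $r=\{(2,2),(2,3)\}$ while the hook of $\mu$ has rim $s=\{(1,3),(1,4)\}$: the hook relocates to a different row, $c=\hand(r)$ but $\hand(s)\neq c^W$, none of the cases (a)--(d) holds, and the two single bead slides produce $(4,1,1)$ and $(2,2,1)$, which are not related by removal of a box (nor are the actual cores, $\emptyset$ and $(1)$). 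This pair is of course not a counterexample to the lemma --- $(4,3,1)$ is not odd and $|\lambda|\geq 2^{k+1}$ --- but it satisfies everything your argument uses, so the argument as written proves a false statement. The hypotheses $2^k\leq|\mu|$ and $|\lambda|<2^{k+1}$ must do real work, and your proposal never engages them beyond quoting Lemma~\ref{lemma:unique_hook} for uniqueness.

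Where these hypotheses enter is the crux, and it is what the paper supplies that your outline lacks. The size constraints make every $2^k$-hook of $\lambda$ and of $\mu$ longer than half the ambient partition, and the paper's Lemma~\ref{lemma:two-big-hooks} (two cells whose hook lengths exceed half of $|\lambda|$ lie in a common row or column) is exactly what forces the corner of $\mu$'s hook to lie in the same row (case (b)) or the same column (case (c)) as the corner of $\lambda$'s hook, rather than jumping elsewhere as in the example above. The abacus route can also be completed, but only by confronting the same issue: one must first note that $|\lambda|<2^{k+1}$ and $|\mu|<2^{k+1}$ guarantee that a \emph{single} slide computes each core, and then rule out the interaction configurations (where the removed box's bead is the hook bead of $\lambda$, or slides into its target position) in which $\mu$'s unique hook could a priori sit at a bead displaced by $2^k$; these are excluded because the slid configuration for $\mu$ would then fail to be a $2^k$-core. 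Either way, this exclusion step is the actual content of the lemma, and the ``short, essentially mechanical verification'' you defer to cannot be carried out without it.
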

Here $c^N$, $c^S$, $c^E$, and $c^W$ denote the cells directly to the north, south, east, and west of $c$.
Also, $\hand(r)$ denotes the node $c$ of $r$ for which neither $c^N$ nor $c^E$ is in $r$.
Similarly, $\foot(r)$ denotes the node of $r$ for which neither $c^W$ nor $c^S$ is in $r$.

Before proving Lemma~\ref{lemma:hook-removal}, we formulate two simpler lemmas which will be used in its proof.
\begin{lemma}
  \label{lemma:large-hook-lemma}
  Let $\lambda$ be any partition and $c$ be a cell of $\lambda$ such that its hook $h(c)$ satisfies $|h(c)|\geq |\lambda|/2$.
  Then $c$ lies in the first row or in the first column of $\lambda$.
\end{lemma}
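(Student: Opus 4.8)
The plan is to argue by contradiction, supposing that $c$ lies neither in the first row nor in the first column of $\lambda$, and then showing that the hook $h(c)$ is too small to satisfy $|h(c)| \geq |\lambda|/2$. Suppose $c$ is the cell in row $i$ and column $j$ with $i \geq 2$ and $j \geq 2$. The key observation is that the cells of $\lambda$ lying weakly northwest of $c$ — that is, in rows $1,\dots,i$ and columns $1,\dots,j$ — form a rectangle of $ij$ cells, all of which belong to $\lambda$ since $\lambda$ is a Young diagram. None of these cells lies in the arm or leg of $c$, except for $c$ itself. So I would partition $\lambda$ into three disjoint pieces: the hook $h(c)$, the rectangular block $R$ of cells strictly northwest of $c$ (rows $1,\dots,i-1$ and columns $1,\dots,j-1$), and the remaining cells, and compare sizes.

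More precisely, I would show that $R$ is disjoint from $h(c)$ and that $|R| \geq |h(c)|$, which already forces $|h(c)| \leq |\lambda| - |R| \leq |\lambda| - |h(c)|$, hence $|h(c)| \leq |\lambda|/2$, contradicting the strict-or-equal hypothesis once we check the inequality is strict. Concretely, the arm of $c$ has length $a = \lambda_i - j \geq 0$, the leg has length $\ell = \lambda'_j - i \geq 0$, and $|h(c)| = a + \ell + 1$. Meanwhile the block $R$ has $(i-1)(j-1)$ cells. The cleanest route is to bound $|h(c)|$ against the cells that are guaranteed to be in $\lambda$ but outside $h(c)$. Since row $1$ has length $\lambda_1 \geq \lambda_i = j + a$, the cells of row $1$ in columns $1, \dots, j+a$ all lie in $\lambda$; of these, only the single cell in column $j$ could meet $h(c)$, and in fact none of row $1$ meets $h(c)$ because $i \geq 2$. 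This gives at least $j + a \geq a + 2$ cells of $\lambda$ outside $h(c)$ coming from row $1$ alone. Symmetrically, column $1$ contributes at least $i + \ell \geq \ell + 2$ cells outside $h(c)$, and the cell $(1,1)$ is double-counted, so the count of cells outside $h(c)$ is at least $(a+2) + (\ell+2) - 1 = a + \ell + 3 = |h(c)| + 2$.

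Therefore $|\lambda| \geq |h(c)| + (|h(c)| + 2) = 2|h(c)| + 2 > 2|h(c)|$, which yields $|h(c)| < |\lambda|/2$, contradicting the hypothesis $|h(c)| \geq |\lambda|/2$. This would complete the proof. The main thing to be careful about is the disjointness bookkeeping: I must verify that the row-$1$ cells and column-$1$ cells I am counting genuinely lie in $\lambda$ and genuinely lie outside $h(c) = \{c\} \cup \{\text{arm of } c\} \cup \{\text{leg of } c\}$. The arm of $c$ sits in row $i \geq 2$ and the leg in column $j \geq 2$, so neither touches row $1$ or column $1$, and $c$ itself is in neither; this is where the assumptions $i \geq 2$ and $j \geq 2$ are used, and it is the only subtle point. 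The rest is a direct counting argument, so I anticipate no serious obstacle beyond setting up this disjointness cleanly.
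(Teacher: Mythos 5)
Your proof is correct, and it takes a mildly different route from the paper's. The paper argues: if $c$ lies neither in the first row nor the first column, consider the cell $c^{NW}$ to its northwest; the hooks $h(c^{NW})$ and $h(c)$ are disjoint, and $|h(c^{NW})| \geq |h(c)| + 2$, so $|\lambda| \geq |h(c)| + |h(c^{NW})| \geq 2|h(c)| + 2 \geq |\lambda| + 2$, a contradiction. Your argument has the same skeleton --- exhibit at least $|h(c)|+2$ cells of $\lambda$ disjoint from $h(c)$ --- but your witness set is the union of the first row and first column rather than the hook of $c^{NW}$: row $1$ supplies at least $\lambda_i = a + j \geq a+2$ cells, column $1$ at least $\lambda'_j = \ell + i \geq \ell+2$, and after removing the double-counted cell $(1,1)$ you get $a+\ell+3 = |h(c)|+2$ cells outside the hook, whence $|\lambda| \geq 2|h(c)|+2$. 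Both are elementary counting arguments; the paper's is more economical, since disjointness and the size bound are immediate from the nesting of the two hooks, whereas yours needs the (correctly handled) bookkeeping that row $1$ and column $1$ avoid $h(c)$ precisely because $i,j \geq 2$. Note that when $i=j=2$ your witness set is exactly $h(c^{NW}) = h((1,1))$, so the two proofs coincide there. One caution: the rectangle idea floated in your first paragraph --- that the $(i-1)\times(j-1)$ block $R$ northwest of $c$ satisfies $|R| \geq |h(c)|$ --- is false in general (for $\lambda = (n,n)$ and $c = (2,2)$ one has $|R| = 1$ but $|h(c)| = n-1$); since you abandoned it in favor of the row/column count, the proof you actually gave is sound, but that fallback route would not have worked.
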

\begin{proof}
  If not, consider the cell $c^{NW}$ which lies to the north-west of $c$.
  The hooks $h(c^{NW})$ and $h(c)$ are disjoint.
  Also, $|h(c^{NW})| \geq |h(c)| + 2$.
  So $|\lambda|\geq |h(c)| + |h(c^{NW})|\geq 2|h(c)|+2\geq |\lambda|+2$, a contradiction.
\end{proof}
\begin{lemma}
  \label{lemma:two-big-hooks}
  If $c_1$ and $c_2$ are two cells in the Young diagram of a partition $\lambda$ such that $|h(c_1)|\geq |\lambda|/2$ and $|h(c_2)|> |\lambda|/2$, then $c_1$ and $c_2$ lie either in the same row or in the same column.
\end{lemma}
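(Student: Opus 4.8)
The plan is to argue by contradiction, assuming that $c_1$ and $c_2$ lie in distinct rows and distinct columns, and to extract a contradiction purely from the sizes of their hooks. First I would invoke Lemma~\ref{lemma:large-hook-lemma}, which applies to both cells since $|h(c_1)|\ge |\lambda|/2$ and $|h(c_2)|>|\lambda|/2$: each of $c_1$ and $c_2$ must lie in the first row or the first column of $\lambda$. If both lie in the first row they share a row, and if both lie in the first column they share a column, so those cases are immediate. Writing $(i,j)$ for the cell in row $i$ and column $j$, I would next note that if one of the cells were the corner $(1,1)$ then the other---being in the first row or the first column---would share its row or its column with it. Hence the only surviving configuration, after possibly interchanging the two labels, is $c_1=(1,j_1)$ with $j_1>1$ and $c_2=(i_2,1)$ with $i_2>1$. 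This relabelling is harmless, because the sole consequence of the hypotheses I intend to use is the combined bound $|h(c_1)|+|h(c_2)|>|\lambda|$, which is symmetric in the two cells.

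Two geometric observations about this configuration then do the work. Regarding a hook as the union of its corner cell, its arm (the cells due east in the same row) and its leg (the cells due south in the same column), the cells of $h(c_1)$ occupy row $1$ from column $j_1$ rightward together with column $j_1$ downward, while those of $h(c_2)$ occupy row $i_2$ from column $1$ rightward together with column $1$ from row $i_2$ downward. Comparing the four pairs of pieces, I would check that the only pair that can meet is the leg of $c_1$ with the arm of $c_2$, and these share at most the single cell $(i_2,j_1)$; thus $|h(c_1)\cap h(c_2)|\le 1$. I would also observe that the corner cell $(1,1)$ lies in neither hook: since $j_1>1$ neither the arm nor the leg of $c_1$ meets column $1$, and since $i_2>1$ neither the arm nor the leg of $c_2$ meets row $1$. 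I expect this overlap computation to be the only real step; it is an elementary case check, but it is the crux of the argument, and the strictness of the hypothesis $|h(c_2)|>|\lambda|/2$ is what makes it bite.

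Finally I would assemble the contradiction by counting. Since $(1,1)\in\lambda$ belongs to neither hook, $h(c_1)\cup h(c_2)$ is contained in $\lambda\setminus\{(1,1)\}$, a set of $|\lambda|-1$ cells. On the other hand, inclusion--exclusion together with $|h(c_1)\cap h(c_2)|\le 1$ gives $|h(c_1)\cup h(c_2)| = |h(c_1)|+|h(c_2)|-|h(c_1)\cap h(c_2)| \ge |h(c_1)|+|h(c_2)|-1 > |\lambda|-1$. Comparing the two bounds yields $|\lambda|-1 > |\lambda|-1$, the desired contradiction, completing the proof.
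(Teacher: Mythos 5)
Your proof is correct and takes essentially the same approach as the paper: both rest on the identical counting argument --- the two hooks meet in at most one cell, neither contains $(1,1)$, so $|h(c_1)|+|h(c_2)|-1\le|\lambda|-1$, contradicting $|h(c_1)|+|h(c_2)|>|\lambda|$. The only difference is cosmetic: you invoke Lemma~\ref{lemma:large-hook-lemma} on both cells to pin them to the first row and first column before checking the overlap, whereas the paper applies that lemma only when one cell is $(1,1)$ and verifies the overlap bound directly for any two cells in distinct rows and columns.
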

\begin{proof}
  If either $c_1$ or $c_2$ is the cell $(1,1)$, then the result follows from Lemma~\ref{lemma:large-hook-lemma}.
  Otherwise, if $c_1$ and $c_2$ do not lie in the same row or column, then the hooks $h(c_1)$ and $h(c_2)$ share at most one cell.
  Also, the cell $(1,1)$ is in neither hook.
  So we have:
  \begin{displaymath}
    |h(c_1)|+|h(c_2)| - 1\leq |\lambda|-1,
  \end{displaymath}
  contradicting
  \begin{displaymath}
    |h(c_1)|+|h(c_2)| > |\lambda|
  \end{displaymath}
  from the hypotheses.
\end{proof}
\begin{proof}[Proof of Lemma~\ref{lemma:hook-removal}]
  Suppose $c$ does not have a neighbour in $r$.
  Then $c$ is a removable cell of $\lambda \setminus r = \core\lambda{2^k}$, and $r$ is a $2^k$-rim hook of $\mu = \lambda \setminus c$.
  So $\core\mu{2^k} = \mu \setminus r = (\lambda \setminus r) \setminus c\in \core\lambda{2^k}^-$, giving the first part of the lemma, and Case~(a) of the second part.

  Now suppose that $c$ has a neighbour in $r$.
  Since $c$ is removable from $\lambda$, $c^E$ and $c^S$ cannot be in $r$.
  But if $c^W$ or $c^N$ is in $r$, then $c$ must also be in $r$, because $r$ is a removable rim hook.

  Let $x$ (respectively $y$) denote the node of $\lambda$ for which $h(x)=r$ (respectively $h(y)=s$).
  We may rule out $x=(1,1)$, because then the longest hook of $\mu$ would be of strictly smaller length.

  Suppose $c^W\in r$, but $c^N\notin r$.
  Then $c = \hand(r)$.
  Note that the hook-lengths of $\mu$ are the same as the hook-lengths of $\lambda$ except in the row and column of $c$, where they decrease by one.
  Since $\lambda$ has no other $2^k$-hook, $y$ must lie either in the row of $c$ or in the column of $c$.
  But $y$ cannot lie in the column of $c$, for then it would not lie in the same row or column as $x$, contradicting Lemma~\ref{lemma:two-big-hooks}.
  So $y$ must lie in the row of $c$, which is also the row of $x$.
  This would imply that $\hand(s) = c^W$, and so $\foot(r)^W = \foot(s)$.
  Then $c' = \foot(r)^W$.
  We have $\core\mu{2^k} = \core\lambda{2^k} \setminus c'$, giving the first part of the lemma and Case~(b) of the second part.
  The case where $c^N\in r$, but $c^W$ not in $r$ can be dealt with similarly, and leads to Case~(c).

  Finally, suppose $c^W$ and $c^N$ are both in $r$.
  Then replacing $c$ by $c^{NW}$ in $r$ results in a $2^k$-hook of $\mu$; this must be $s$, giving the first part of the lemma and Case~(d) of the second part.
\end{proof}

By Lemma~\ref{lemma:unique_hook}, the odd partitions of $2^k$ are precisely the hook-shaped ones.
In general, if $2^k\leq n<2^{k+1}$, the function $\mathrm{core}_{2^k}:\lambda\mapsto \core\lambda{2^k}$ maps odd partitions of $n$ onto odd partitions of $n-2^k$.
\begin{prop}
  \label{prop:coring-tree}
  Let $\lambda$ be an odd partition of $n$.
  \begin{enumerate}
  \item If $2^k< n< 2^{k+1}$, then the map
    \begin{displaymath}
      \mathrm{core}_{2^k}:\lambda^-\cap \Lambda^{\mathrm{odd}}_{n-1}\to \core\lambda{2^k}^-\cap\Lambda^{\mathrm{odd}}_{n-2^k-1}
    \end{displaymath}
    is injective.
  \item If $2^k-1<n< 2^{k+1}-1$, then the map
    \begin{displaymath}
      \mathrm{core}_{2^k}:\lambda^+\cap \Lambda^{\mathrm{odd}}_{n+1}\to \core\lambda{2^k}^+\cap\Lambda^{\mathrm{odd}}_{n-2^k+1}
    \end{displaymath}
    is injective.
  \end{enumerate}
\end{prop}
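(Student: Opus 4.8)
The plan is to read both statements off the Main Lemma (Lemma~\ref{lemma:hook-removal}): each asserts that a single added or removed cell is tracked \emph{injectively} by $\mathrm{core}_{2^k}$, and the Main Lemma already records exactly how such a cell moves under coring.

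For part~(1), fix the odd partition $\lambda$ of $n$ and let $\mu\in\lambda^-\cap\Lambda^{\mathrm{odd}}_{n-1}$. Since $2^k<n$ forces $|\mu|=n-1\geq 2^k$ while $|\lambda|=n<2^{k+1}$, the pair $(\lambda,\mu)$ meets the hypotheses of Lemma~\ref{lemma:hook-removal}; its first part gives $\core\mu{2^k}\in\core\lambda{2^k}^-$, and Lemma~\ref{lemma:unique_hook} shows $\core\mu{2^k}$ is odd of size $n-2^k-1$, so the map does land in the stated target. For injectivity I would take $\mu_1,\mu_2\in\lambda^-\cap\Lambda^{\mathrm{odd}}_{n-1}$ with $\core{\mu_1}{2^k}=\core{\mu_2}{2^k}$, write $c_i$ for the cell of $\lambda$ missing from $\mu_i$ and $c_i'$ for the cell of $\core\lambda{2^k}$ missing from $\core{\mu_i}{2^k}$, and observe that equality of images forces $c_1'=c_2'=:c'$. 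Everything then reduces to deducing $c_1=c_2$.

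The decisive observation is that the rim $r$ of the unique $2^k$-hook of $\lambda$ is common to both instances, so the four alternatives of Lemma~\ref{lemma:hook-removal}(2) recover $c_i$ from $c'$ and the fixed data $r,\hand(r),\foot(r)$: one has $c_i=c'$ in case~(a), $c_i=\hand(r)$ in case~(b), $c_i=\foot(r)$ in case~(c), and $c'=c_i^{NW}$ (so $c_i=(c')^{SE}$) in case~(d). It thus suffices to show that the position of $c'$ relative to $r$ picks out the case unambiguously: case~(a) is exactly when $c'$ has no neighbour in $r$; case~(b) is signalled by $c'=\foot(r)^W$, whose east neighbour $\foot(r)$ lies in $r$; case~(c) by $c'=\hand(r)^N$, whose south neighbour lies in $r$; and case~(d) by $(c')^{E},(c')^{S}\in r$. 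A short check rules out every clash between distinct cases using the extremal location of the hand and foot in the border strip; for instance $\hand(r)^N=\foot(r)^W$ would place the foot strictly north of the hand, which is impossible in a rim hook. Hence $c'$ determines the case, the case determines $c_i$, and therefore $\mu_1=\mu_2$.

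Part~(2) follows by the symmetric argument applied to the pairs $(\nu_i,\lambda)$ with $\nu_i\in\lambda^+\cap\Lambda^{\mathrm{odd}}_{n+1}$. Here $2^k\leq n$ and $n+1<2^{k+1}$ again validate Lemma~\ref{lemma:hook-removal} for each such pair; its first part gives $\core\lambda{2^k}\in\core{\nu_i}{2^k}^-$, i.e.\ $\core{\nu_i}{2^k}\in\core\lambda{2^k}^+$, so with Lemma~\ref{lemma:unique_hook} the map lands in the stated target. Now the fixed partition is the \emph{smaller} one $\lambda$, whose $2^k$-hook $s$ is common to all instances, and the four cases express the added cell as $c_i=c'$, as $c_i=\hand(s)^E$ with $c'=\foot(s)$, as $c_i=\foot(s)^S$ with $c'=\hand(s)$, or through $c'=c_i^{NW}$. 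The identical case-disjointness check, with $s$ in place of $r$, again forces $c_1=c_2$ and hence $\nu_1=\nu_2$. I expect this case-disjointness verification to be the only genuine obstacle in either part: once $c'$ is shown to be incompatible with two different cases, injectivity is immediate, and all that remains is the bookkeeping of sizes together with Lemma~\ref{lemma:unique_hook}.
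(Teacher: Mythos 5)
Your proof is correct and takes essentially the same approach as the paper: both arguments rest entirely on the four-case analysis of Lemma~\ref{lemma:hook-removal}, with part~(2) handled by interchanging the roles of $\lambda$ and $\mu$, and both reduce injectivity to checking that distinct cases cannot produce the same cell $c'$. The only difference is presentational — the paper fixes two distinct removed cells $c\neq d$ and shows $c'\neq d'$ case by case, while you argue the contrapositive, recovering the case (and hence the removed cell) from the position of $c'$ relative to the rim $r$.
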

\begin{proof}
  Suppose that $2^k<n< 2^{k+1}$.
  Let $\mu$ and $\nu$ be distinct elements of $\lambda^-\cap \Lambda^{\mathrm{odd}}_{n-1}$.
  Let $c$ and $d$ be the cells of $\lambda$ that are not in $\mu$ and not in $\nu$ respectively, and $c'$ and $d'$ be the cells of $\core\lambda{2^k}$ that are not in $\core\mu{2^k}$ and $\core\nu{2^k}$ respectively.
  Since $\mu\neq \nu$, $c\neq d$.
  We need to show that $c'\neq d'$.

  Consider first the case where $c$ has no neighbour in $r$.
  Then, if $d$ also has no neighbour in $r$, Lemma~\ref{lemma:hook-removal} tells us that $c'=c$ and $d'=d$, so $c'\neq d'$.
  On the other hand, when $d$ satisfies one of cases (b), (c), and (d) of Lemma~\ref{lemma:hook-removal}, then $d$ lies in $r$, and $d'$ has a neighbour in $r$.
  Since $c'=c$ does not have a neighbour in $r$, $c'\neq d'$.

  Now suppose $c=\hand(r)$.
  Then $c'=\foot(r)^W$.
  Since $d\neq c$, $d$ corresponds to one of the cases (a), (c), and (d) in Lemma~\ref{lemma:hook-removal}.
  But none of these can give rise to $c'$, so $d'\neq c'$.
  The remaining cases are similar.
  This concludes the proof of Part (1) of the proposition.
  The proof of Part (2) is similar, but the roles played by $\lambda$ and $\mu$ in Lemma~\ref{lemma:hook-removal} are interchanged.
\end{proof}

\section{Tree structure}
\label{sec:tree-structure}
\begin{theorem}
  [\sc Unique Parent Theorem]
  \label{theorem:unique-parent}
  For every odd partition $\lambda$ with $|\lambda|>1$, there exists a unique odd partition $\mu\in \lambda^-$.
In other words, the subgraph induced in Young's graph by $\Lambda^{\mathrm{odd}}$ is a rooted tree.
\end{theorem}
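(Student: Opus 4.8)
The plan is to establish, by strong induction on $n\ge 1$, the statement $P(n)$: every odd partition of $n$ has exactly one odd partition in $\lambda^-$. For $n=1$ this is immediate, since $(1)^-=\{\emptyset\}$ and $f_\emptyset=1$. I would separate the argument into existence of at least one odd parent and an upper bound of one. Existence is uniform and cheap, coming from the branching rule $f_\lambda=\sum_{\mu\in\lambda^-}f_\mu$, which reduces modulo $2$ to
\[
  1\equiv f_\lambda\equiv \#\{\mu\in\lambda^-:f_\mu\text{ is odd}\}\pmod 2,
\]
so the number of odd partitions in $\lambda^-$ is odd, hence at least one. The entire difficulty therefore lies in bounding this number above by $1$.

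For the generic inductive step, write $2^k\le n<2^{k+1}$ and suppose $n$ is not a power of $2$, so that $2^k<n<2^{k+1}$. Then Proposition~\ref{prop:coring-tree}(1) supplies an injection
\[
  \mathrm{core}_{2^k}:\lambda^-\cap\Lambda^{\mathrm{odd}}_{n-1}\to \core\lambda{2^k}^-\cap\Lambda^{\mathrm{odd}}_{n-2^k-1}.
\]
The target is exactly the set of odd parents of $\core\lambda{2^k}$, an odd partition of $n-2^k$ with $1\le n-2^k<n$. By the inductive hypothesis $P(n-2^k)$ this set is a singleton, so injectivity forces $|\lambda^-\cap\Lambda^{\mathrm{odd}}|\le 1$. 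Combined with the parity observation above, the count is exactly $1$.

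It remains to settle the base case $n=2^k$, where coring by $2^k$ is degenerate and Proposition~\ref{prop:coring-tree} does not apply. Here Lemma~\ref{lemma:unique_hook} tells us that the odd partitions of $2^k$ are precisely the hook shapes, and a hook has at most two removable cells, namely the end of its arm and the end of its leg. Hence $|\lambda^-|\le 2$ and so $|\lambda^-\cap\Lambda^{\mathrm{odd}}|\le 2$; being odd and positive, it must equal $1$, closing the induction. I expect the main obstacle to be bookkeeping rather than a new idea: one must organize the induction so that the degenerate power-of-two layers are treated separately from the generic ones, and verify that the recursion on $\core\lambda{2^k}$ stays within the range $1\le n-2^k<n$ where $P$ is already known. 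Once the branching-rule parity remark is in place it does double duty, supplying existence everywhere and, at powers of $2$, collapsing the crude bound ``$\le 2$'' to the exact value $1$, while Proposition~\ref{prop:coring-tree}(1) carries the generic case.
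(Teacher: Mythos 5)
Your proof is correct, and its inductive step is the same as the paper's: for $2^k<n<2^{k+1}$ you use the injectivity of $\mathrm{core}_{2^k}$ from Proposition~\ref{prop:coring-tree}(1) to bound the number of odd parents above by $1$, and Pieri's rule modulo $2$ to get existence (the paper inducts on $\nu(n)$, the binary digit sum, rather than on $n$, but since the reduction replaces $n$ by $n-2^k$ the two inductions are identical in substance). The one genuine difference is the base case $n=2^k$. The paper identifies the odd partitions of $2^k$ as hooks $(r,1^{2^k-r})$, computes the dimensions of the two candidates in $\lambda^-$ as $\binom{2^k-2}{r-2}$ and $\binom{2^k-2}{r-1}$, and decides their parities via the no-carries criterion for odd binomial coefficients (the citation to Stanley). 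You avoid that computation by extracting slightly more from Pieri's rule: the number of odd partitions in $\lambda^-$ is not merely positive but odd, and since a hook has at most two removable cells this count is an odd number at most $2$, hence exactly $1$. This is a modest but real simplification: the base case becomes a pure parity argument, self-contained within the tools already needed for the inductive step, rather than relying on an external fact about binomial coefficients.
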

\begin{proof}
  Let $n = |\lambda|$, and let $\nu(n)$ denote the sum of the binary digits of $n$.
  We proceed by induction on $\nu(n)$.
  If $\nu(n)=1$, then $n=2^k$.
  In this case, $\lambda$ is a hook, say $(r, 1^{2^k-r})$, and $f_\lambda = \binom{2^k-1}{r-1}$ (see Section~\ref{sec:pascal}).
  Recall the well-known result (see e.g., \cite[Exercise~1.14]{ec1}) that a binomial coefficient $\binom nm$ is odd if and only if the sets of place values where $1$ occurs in the binary expansions of $m$ and $n-m$ are disjoint (in other words, there are no carries when $m$ and $n-m$ are added in binary).
  The set $\lambda^-$ consists of the partitions $(r-1, 1^{2^k-r})$ and $(r, 1^{2^k-r-1})$ whose dimensions are $\binom{2^k-2}{r-2}$ and $\binom{2^k-2}{r-1}$.
  The former is odd when $r$ is even, and the latter is odd when $r$ is odd.
  In any case, one of them is odd, and the other is even, proving the theorem for $\nu(n) =1$.

  If $\nu(n)>1$, then $2^k<n<2^{k+1}$ for some $k\geq 1$.
  Since $\nu(n-2^k)<\nu(n)$, by induction, we may assume that $|\core\lambda{2^k}^-\cap \Lambda_{n-2^k-1}^{\mathrm{odd}}| = 1$.
  Proposition~\ref{prop:coring-tree} implies that
  \begin{displaymath}
    |\lambda^-\cap \Lambda_{n-1}^{\mathrm{odd}}| \leq 1.
  \end{displaymath}
  By Pieri's rule, we have
  \begin{displaymath}
    f_\lambda = \sum_{\mu\in\lambda^-} f_\mu.
  \end{displaymath}
  So if $f_\lambda$ is odd, then $f_\mu$ is odd for at least one $\mu\in \lambda^-$. In other words,
  \begin{displaymath}
    |\lambda^-\cap \Lambda_{n-1}^{\mathrm{odd}}| \geq 1,
  \end{displaymath}
  hence equality holds.
\end{proof}
\begin{prop}
  \label{prop:counting-by-core}
  Suppose $2^k-1<n<2^{k+1}-1$ for some positive integer $k$.
  Then, for any odd partition $\lambda$ of $n$,
  \begin{displaymath}
    |\lambda^+\cap \Lambda^{\mathrm{odd}}_{n+1}| = |\core\lambda{2^k}^+\cap \Lambda^{\mathrm{odd}}_{n-2^k+1}|.
  \end{displaymath}
\end{prop}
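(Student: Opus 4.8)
The plan is to upgrade the injection supplied by Proposition~\ref{prop:coring-tree}(2) into a bijection, but by a global counting argument rather than by constructing an explicit inverse. Our hypothesis $2^k-1<n<2^{k+1}-1$ is exactly the one required for Part~(2) of that proposition, so the map $\mathrm{core}_{2^k}\colon\lambda^+\cap\Lambda^{\mathrm{odd}}_{n+1}\to\core\lambda{2^k}^+\cap\Lambda^{\mathrm{odd}}_{n-2^k+1}$ is injective. This already gives the pointwise inequality
\[
  |\lambda^+\cap\Lambda^{\mathrm{odd}}_{n+1}|\leq|\core\lambda{2^k}^+\cap\Lambda^{\mathrm{odd}}_{n-2^k+1}|
\]
for every odd partition $\lambda$ of $n$. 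The idea is then to sum both sides over all $\lambda\in\Lambda^{\mathrm{odd}}_n$ and show that the two totals agree; since the left summand never exceeds the right, equality of the totals forces equality term by term, which is precisely the assertion.

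For the left-hand total I would use the Unique Parent Theorem (Theorem~\ref{theorem:unique-parent}): every odd partition of $n+1$ (and here $n+1>2^k\geq 2$) has a unique odd parent, necessarily an odd partition of $n$. The fibres of the unique-parent map therefore partition $\Lambda^{\mathrm{odd}}_{n+1}$ according to which $\lambda\in\Lambda^{\mathrm{odd}}_n$ lies below, whence
\[
  \sum_{\lambda\in\Lambda^{\mathrm{odd}}_n}|\lambda^+\cap\Lambda^{\mathrm{odd}}_{n+1}|=|\Lambda^{\mathrm{odd}}_{n+1}|.
\]
For the right-hand total I would use the final sentence of Lemma~\ref{lemma:unique_hook}, which says that $\mathrm{core}_{2^k}\colon\Lambda^{\mathrm{odd}}_n\to\Lambda^{\mathrm{odd}}_{n-2^k}$ is exactly $2^k$-to-one. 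Grouping the $\lambda$ over each fibre and applying the Unique Parent Theorem a second time, now to the odd partitions of $n-2^k+1$, gives
\[
  \sum_{\lambda\in\Lambda^{\mathrm{odd}}_n}|\core\lambda{2^k}^+\cap\Lambda^{\mathrm{odd}}_{n-2^k+1}|
  =2^k\sum_{\mu\in\Lambda^{\mathrm{odd}}_{n-2^k}}|\mu^+\cap\Lambda^{\mathrm{odd}}_{n-2^k+1}|
  =2^k\,|\Lambda^{\mathrm{odd}}_{n-2^k+1}|.
\]

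It then remains to verify the numerical identity $|\Lambda^{\mathrm{odd}}_{n+1}|=2^k\,|\Lambda^{\mathrm{odd}}_{n-2^k+1}|$, for which I would invoke Macdonald's count $|\Lambda^{\mathrm{odd}}_m|=2^{\alpha(m)}$ from the introduction. Writing $m=n+1$, the hypothesis reads $2^k<m<2^{k+1}$, so the highest set bit of $m$ is $2^k$ while the lower bits are exactly the binary expansion of $m-2^k=n-2^k+1$; the definition of $\alpha$ then yields $\alpha(n+1)=k+\alpha(n-2^k+1)$, i.e. $2^{\alpha(n+1)}=2^k\cdot2^{\alpha(n-2^k+1)}$. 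The two totals thus coincide, and together with the pointwise inequality this forces equality $|\lambda^+\cap\Lambda^{\mathrm{odd}}_{n+1}|=|\core\lambda{2^k}^+\cap\Lambda^{\mathrm{odd}}_{n-2^k+1}|$ for each individual $\lambda$. The main obstacle is really the conceptual step of passing from a pointwise $\leq$ to a pointwise $=$: it succeeds only because the aggregate counts match exactly, so one must get the two bookkeeping inputs right—the Unique Parent Theorem and the $2^k$-to-one coring map—paying attention to the degenerate case $n=2^k$, where $n-2^k+1=1$ and the unique odd parent in question is the empty partition.
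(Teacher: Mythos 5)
Your proposal is correct and is essentially the paper's own argument: the paper likewise starts from $2^{\alpha(n+1)}=|\Lambda^{\mathrm{odd}}_{n+1}|$, expands it via Theorem~\ref{theorem:unique-parent}, groups by $2^k$-cores, applies the injectivity of Proposition~\ref{prop:coring-tree} to get the one inequality in the chain, then uses the $2^k$-to-one statement of Lemma~\ref{lemma:unique_hook}, Theorem~\ref{theorem:unique-parent} again, and Macdonald's count to close the loop and force equality at every step. Your only (harmless) differences are presentational—you sum the pointwise inequality and compare totals rather than writing a single chain—and you note the edge case $n=2^k$ explicitly, which the paper leaves implicit.
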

\begin{proof}
  We have
  \begin{align*}
    2^{\alpha(n+1)} & = |\Lambda_{n+1}^{\mathrm{odd}}|\\
    & = \sum_{\lambda\in \Lambda_n^\mathrm{odd}} |\lambda^+\cap \Lambda_{n+1}^{\mathrm{odd}}| & \text{(by Theorem~\ref{theorem:unique-parent})}\\
    & = \sum_{\mu\in \Lambda_{n-2^k}^{\mathrm{odd}}}\sum_{\core\lambda{2^k} = \mu} |\lambda^+\cap \Lambda_{n+1}^{\mathrm{odd}}| \\
    & \leq \sum_{\mu\in \Lambda_{n-2^k}^{\mathrm{odd}}}\sum_{\core\lambda{2^k} = \mu} |\mu^+\cap \Lambda_{n-2^k+1}^{\mathrm{odd}}| & \text{(by Proposition~\ref{prop:coring-tree})}\\
    & = 2^k \sum_{\mu\in \Lambda_{n-2^k}^{\mathrm{odd}}}|\mu^+\cap \Lambda_{n-2^k+1}^{\mathrm{odd}}| & \text{(by Lemma~\ref{lemma:unique_hook})}\\
    & = 2^k |\Lambda^{\mathrm{odd}}_{n-2^k-1}| & \text{(by Theorem~\ref{theorem:unique-parent})}\\
    & = 2^k\times 2^{\alpha(n-2^k+1)}\\
    & = 2^{\alpha(n+1)}.
  \end{align*}
  Since the first and last terms are equal, equality holds at each step, and the proposition follows.
\end{proof}
Let $v_2(n)$ denote the $2$-adic valuation of $n$.
\begin{theorem}
  \label{theorem:children}
  Let $\lambda$ be an odd partition of $n$.
  If $n$ is even, then there exists a unique odd partition $\mu\in \lambda^+$.
  If $n$ is odd and $v_2(n+1) = v$, then
  \begin{displaymath}
    |\lambda^+\cap \Lambda_{n+1}^{\mathrm{odd}}| =
    \begin{cases}
      2, & \text{if $\core\lambda{2^v}$ is a hook,}\\
      0, & \text{otherwise}.
    \end{cases}
  \end{displaymath}
  In particular, the induced subgraph of Young's graph consisting of partitions in $\Lambda^{\mathrm{odd}}$ is an incomplete binary tree.
\end{theorem}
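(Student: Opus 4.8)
The plan is to establish the two size‑parity cases by induction on $n$, using Proposition~\ref{prop:counting-by-core} as the recursion: for $2^k\le n<2^{k+1}$ with $n\neq 2^{k+1}-1$ it equates the number of odd children of $\lambda$ with the number of odd children of $\core\lambda{2^k}$, a partition of $n-2^k$. Peeling off the top binary digit this way, I would reduce each case to a base case — the empty partition when $n$ is even, and a Mersenne value $2^v-1$ when $n$ is odd — and finally read off the tree structure from the resulting child counts together with the Unique Parent Theorem.

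When $n$ is even and positive, choose $k\ge 1$ with $2^k\le n<2^{k+1}$; then $2^k-1<n<2^{k+1}-1$ because $n$ is even, so Proposition~\ref{prop:counting-by-core} gives $|\lambda^+\cap\Lambda^{\mathrm{odd}}_{n+1}|=|\core\lambda{2^k}^+\cap\Lambda^{\mathrm{odd}}_{n-2^k+1}|$. Since $\core\lambda{2^k}$ is an odd partition of the strictly smaller even number $n-2^k$, induction (with base case $\lambda=\emptyset$, whose only child $(1)$ is odd) yields a unique odd child.

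When $n$ is odd, set $v=v_2(n+1)\ge 1$. If $n\neq 2^{k+1}-1$, the same proposition reduces the count for $\lambda$ to that for $\mu:=\core\lambda{2^k}$, an odd partition of the odd number $n-2^k<2^k$. Two facts must be checked: first, that $v$ is preserved, $v_2((n-2^k)+1)=v$, which holds because $2^k<n+1<2^{k+1}$ forces the top set bit of $n+1$ to lie strictly above position $v$, so subtracting $2^k$ leaves the lowest $v+1$ bits intact; second, that $\core\mu{2^v}=\core\lambda{2^v}$. The latter is the compatibility of cores for $2^v\mid 2^k$: removing a rim hook of length divisible by $2^v$ fixes every bead residue modulo $2^v$ on a one‑runner abacus, hence fixes the $2^v$-core, and $\core\lambda{2^k}$ is obtained from $\lambda$ by removing $2^k$-rim hooks. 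Iterating until the size reaches $2^v-1$, the partition obtained equals $\core\lambda{2^v}$ — it has size $<2^v$, so it is its own $2^v$-core, which by compatibility is $\core\lambda{2^v}$ — and no intermediate size is Mersenne since $v$ never changes. At the base value $n=2^v-1$ we have $\core\lambda{2^v}=\lambda$, the children lie at size $2^v$, and by Lemma~\ref{lemma:unique_hook} the odd partitions of $2^v$ are exactly the hooks; since deleting a corner of a hook always leaves a hook, a non‑hook $\lambda$ has no odd child, while a hook $\lambda$ is covered by precisely two hooks (extending its arm or its leg), any other addable cell giving a non‑hook. This yields $2$ or $0$ odd children according as $\core\lambda{2^v}$ is or is not a hook.

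Together with the Unique Parent Theorem (Theorem~\ref{theorem:unique-parent}), these counts show that every vertex of $\Lambda^{\mathrm{odd}}$ has a unique parent and at most two children (one at even sizes, none or two at odd sizes), so the induced subgraph is an incomplete binary tree. I expect the main obstacle to be the bookkeeping in the odd case: verifying that $v=v_2(n+1)$ is invariant under peeling and, above all, the core‑compatibility identity $\core\mu{2^v}=\core\lambda{2^v}$. Once these are secured, both inductions follow mechanically from Proposition~\ref{prop:counting-by-core} and the elementary hook analysis at the base.
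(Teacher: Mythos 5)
Your proposal is correct and follows essentially the same route as the paper: both proofs peel off the top binary digit using Proposition~\ref{prop:counting-by-core}, reduce to the base cases $n=0$ (even case) and $n=2^v-1$ (odd case, where $\core\lambda{2^v}=\lambda$ and the odd partitions of $2^v$ are exactly the hooks by Lemma~\ref{lemma:unique_hook}), and finish with the same two-hooks-or-none analysis. The only difference is that you spell out two bookkeeping facts the paper leaves implicit --- that $v_2(n+1)$ is preserved under peeling and that $\core{\core\lambda{2^k}}{2^v}=\core\lambda{2^v}$ --- which is a welcome clarification but not a different argument.
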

\begin{proof}
  When $n$ is even, the theorem is proved by induction on $\nu(n)$.
  If $\nu(n) = 0$, then $n=0$, and the result is obviously true.

  If $n$ is even and $\nu(n)>0$, then $2^k-1<n<2^{k+1}-1$ for some $k>0$.
  Proposition~\ref{prop:counting-by-core} reduces the result to $\core\lambda{2^k}$ of size $n-2^k$.
  Since $\nu(n-2^k)<\nu(n)$, the theorem follows by induction.

  When $n$ is odd, the theorem is proved by induction on $\nu(n+1)$.
  If $\nu(n+1) = 1$ then $n=2^v-1$, and $\core\lambda{2^v} = \lambda$.
  If $\lambda$ is not a hook, then no element of $\lambda^+$ is a hook.
  If $\lambda$ is a hook, then $\lambda^+$ has two hooks.
  Since a partition of $2^v$ is odd if and only if it is a hook, the theorem holds for $\nu(n+1)=1$.

  If $n$ is odd and $\nu(n+1)>1$, then $2^k-1<n<2^{k+1}-1$ for some $k>0$.
  Proposition~\ref{prop:counting-by-core} then reduces the result to $\core\lambda{2^k}$, a partition of $n-2^k$, and, as before, the theorem follows by induction on $\nu(n+1)$.
\end{proof}

The subtrees consisting of the first $2^k-1$ rows, for each $k$, are repeated infinitely many times in the Macdonald tree.
Given a partition $\lambda\in \Lambda^{\mathrm{odd}}$, define $\lambda^{+[0, k]}$ to be the induced subtree rooted at $\lambda$ consisting of nodes of $\Lambda^{\mathrm{odd}}$ which descend from $\lambda$ (more precisely, nodes which are greater than or equal to $\lambda$ in the containment order) and whose ranks lie between $|\lambda|$ and $|\lambda|+k$.
\begin{theorem}
  [\sc Self-similarities of the Macdonald tree]
  \label{lemma:fractal}
  Let $n$ be a positive integer such that $v_2(n)\geq v$.
  Let $\lambda$ be an odd  partition of $n$.
  Then
  \begin{displaymath}
    \mathrm{core}_{2^v}: \lambda^{+[0,2^v-1]}\to \emptyset^{+[0,2^v-1]}
  \end{displaymath}
  is an isomorphism of trees.
\end{theorem}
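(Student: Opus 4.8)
The plan is to argue by induction on $\nu(n)$, the number of nonzero binary digits of $n$, peeling off the leading power of $2$ at each stage. Throughout I will use two standard facts about cores that are not isolated in the excerpt but follow from the abacus description in \cite[Section~2.7]{2009JamesKerber}: the \emph{nesting} of cores, $\core{(\core\mu{2^K})}{2^v}=\core\mu{2^v}$ whenever $v\le K$; and the fact that, by Lemma~\ref{lemma:unique_hook} applied repeatedly, if $\mu$ is odd with $2^v\le|\mu|$ then $\mathrm{core}_{2^v}$ lowers the rank of $\mu$ to $|\mu|\bmod 2^v$. For $\mu$ in $\lambda^{+[0,2^v-1]}$, so $|\mu|=n+j$ with $0\le j<2^v$ and $2^v\mid n$, the latter gives image rank exactly $j\in[0,2^v-1]$, and in particular $\core\lambda{2^v}=\emptyset$; this makes $\mathrm{core}_{2^v}$ rank- and root-compatible. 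Writing $2^K$ for the leading power of $2$ in $n$ (so $K\ge v$), the decisive observation is that since the low $v$ bits of $n$ vanish and $j<2^v\le 2^K$, the high bits of $n+j$ agree with those of $n$; hence the leading power of \emph{every} rank occurring in $\lambda^{+[0,2^v-1]}$ is the same constant $2^K$. This is what lets me apply Proposition~\ref{prop:coring-tree} and Proposition~\ref{prop:counting-by-core}, stated for coring by the leading power, uniformly at every level.

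The engine of the proof is a \emph{Key Lemma}: if $2^K\le n$ and $n+m<2^{K+1}$, then $\mathrm{core}_{2^K}$ restricts to an isomorphism of trees $\lambda^{+[0,m]}\to(\core\lambda{2^K})^{+[0,m]}$. I would prove this by induction on the level $r$, $n\le r\le n+m$. Lemma~\ref{lemma:unique_hook} makes the map well defined, rank-lowering by $2^K$ at each level, and sends the root $\lambda$ to the root $\core\lambda{2^K}$. For the inductive step, assume $\mathrm{core}_{2^K}$ is already a bijection from the rank-$r$ nodes of $\lambda^{+[0,m]}$ to the rank-$(r-2^K)$ nodes of $(\core\lambda{2^K})^{+[0,m]}$, and fix such a node $\mu$. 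Since $2^K-1<r<2^{K+1}-1$, Proposition~\ref{prop:coring-tree}(2) shows $\mathrm{core}_{2^K}$ is injective on $\mu^+\cap\Lambda^{\mathrm{odd}}_{r+1}$, Proposition~\ref{prop:counting-by-core} upgrades this to a bijection onto $\core\mu{2^K}^+\cap\Lambda^{\mathrm{odd}}_{r+1-2^K}$, and Lemma~\ref{lemma:hook-removal}(1) shows each covering edge $\mu\in\chi^-$ goes to a covering edge $\core\mu{2^K}\in\core\chi{2^K}^-$. Because every rank-$(r+1)$ node of either subtree has a unique parent at the preceding rank (Theorem~\ref{theorem:unique-parent}), these per-node bijections on children glue into a single edge-preserving bijection at rank $r+1$, completing the level induction.

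To finish, I run the outer induction on $\nu(n)$. The case $\nu(n)=0$ is trivial: then $n=0$, $\lambda=\emptyset$, and the map is the identity. For $\nu(n)>0$, take $m=2^v-1$; the range $[n,n+2^v-1]$ lies in $[2^K,2^{K+1})$, its right endpoint being at most $2^{K+1}-1$ because $2^v\mid n$ forces $n\le 2^{K+1}-2^v$. The Key Lemma then gives an isomorphism $\mathrm{core}_{2^K}\colon\lambda^{+[0,2^v-1]}\to(\core\lambda{2^K})^{+[0,2^v-1]}$. Since $\core\lambda{2^K}$ is an odd partition of $n-2^K$, still divisible by $2^v$ and with $\nu(n-2^K)=\nu(n)-1$, the induction hypothesis gives an isomorphism $\mathrm{core}_{2^v}\colon(\core\lambda{2^K})^{+[0,2^v-1]}\to\emptyset^{+[0,2^v-1]}$. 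The nesting identity $\core{(\core\lambda{2^K})}{2^v}=\core\lambda{2^v}$ identifies the composite of these two isomorphisms with $\mathrm{core}_{2^v}$ itself, which is therefore an isomorphism.

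The step I expect to be the main obstacle is the gluing inside the Key Lemma: Propositions~\ref{prop:coring-tree} and~\ref{prop:counting-by-core} only control the children of a single fixed node, so I must argue that the bijections on children of distinct rank-$r$ nodes have disjoint images and together exhaust rank $r+1$ on both sides. This is exactly where uniqueness of parents (Theorem~\ref{theorem:unique-parent}) is indispensable, applied simultaneously in the source and target trees. A secondary but routine point is the bookkeeping with the two core facts — the nesting of cores and the exact rank drop — which together guarantee that $\mathrm{core}_{2^v}$ genuinely lands in $\emptyset^{+[0,2^v-1]}$ and factors through leading-power coring; I would extract these from the abacus model rather than reprove them.
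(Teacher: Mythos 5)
Your proposal is correct and takes essentially the same route as the paper: peel off the leading power of two with $\mathrm{core}_{2^K}$, show this coring map is an isomorphism on the relevant subtree via Proposition~\ref{prop:coring-tree}, iterate down to the empty partition, and use the nesting identity $\mathrm{core}_{2^v}\circ\mathrm{core}_{2^K}=\mathrm{core}_{2^v}$ to identify the composite. The paper's proof is only a three-line sketch, and your Key Lemma (injectivity from Proposition~\ref{prop:coring-tree}, matching cardinalities from Proposition~\ref{prop:counting-by-core}, edge preservation from Lemma~\ref{lemma:hook-removal}(1), and gluing of the per-node bijections via Theorem~\ref{theorem:unique-parent}) simply makes explicit the details the paper leaves to the reader.
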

\begin{proof}
  Suppose that $2^k\leq n<2^{k+1}$ for some $k\geq v$.
  Then, by Proposition~\ref{prop:coring-tree}, the map $\mu\mapsto \core\mu{2^k}$ gives rise to an isomorphism $\lambda^{+[0,2^v-1]}\to \core\lambda{2^k}^{+[0,2^v-1]}$.
  Repeating this operation until $k$ is reduced to $0$, and noting that $\mathrm{core}_{2^k}\circ \mathrm{core}_{2^l} = \mathrm{core}_{2^k}$ for all $k\leq l$, we obtain the desired result.
\end{proof}

By a ray in the Macdonald tree, we mean a sequence $\{\lambda^{(n)}\}_{n=0}^\infty$ of odd partitions such that $\lambda^{(0)}=\emptyset$ and $\lambda^{(n+1)}\in \lambda^{(n)+}$.
\begin{theorem}
  \label{theorem:ray}
  The only rays in the Macdonald tree are $\{(n)\}_{n=1}^\infty$ and $\{(1^n)\}_{n=1}^\infty$.
\end{theorem}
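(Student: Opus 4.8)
The plan is to reduce the statement to a classical fact about the odd entries of Pascal's triangle, the decisive step being to show that \emph{every} partition lying on an infinite ray must be a hook. First I would check that the two candidate sequences really are rays: for $\lambda=(n)$ or $\lambda=(1^n)$ one has $f_\lambda=1$, which is odd, and each term is covered by the next, so both are rays in the Macdonald tree.

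For the converse, let $R=\{\lambda^{(n)}\}$ be an arbitrary infinite ray. The crucial observation is that $\lambda^{(2^k-1)}$ must be a hook for every $k\geq1$. Indeed, at size $2^k-1$ we have $v_2(2^k)=k$ and $\core{\lambda^{(2^k-1)}}{2^k}=\lambda^{(2^k-1)}$ (a partition of size below $2^k$ has no $2^k$-hook), so by Theorem~\ref{theorem:children} a partition of $2^k-1$ that is not a hook has no odd child and therefore cannot lie on an infinite ray. Now every $n$ satisfies $n\leq 2^k-1$ for some $k$, and since the ray is increasing we have $\lambda^{(n)}\subseteq\lambda^{(2^k-1)}$; as any Young diagram contained in a hook is again a hook (it has no cell strictly to the south-east of $(1,1)$), it follows that every $\lambda^{(n)}$ is a hook.

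Next I would translate the surviving dynamics into Pascal's triangle. Writing the hook as $\lambda^{(n)}=(a_n,1^{\,n-a_n})$, oddness of $f_{\lambda^{(n)}}=\binom{n-1}{a_n-1}$ becomes exactly the statement that this binomial coefficient is odd, and the covering relation forces $a_{n+1}\in\{a_n,a_n+1\}$ (the only boxes one may add to a hook while keeping it a hook extend the first row or the first column). Thus $R$ corresponds to a monotone downward lattice path through the odd entries of Pascal's triangle, i.e. an infinite ray in the Sierpi\'nski tree of Section~\ref{sec:pascal}. To finish, I would show that the only such infinite paths are the two boundary edges: in row $2^m$ the no-carry criterion recalled in the introduction shows that the only odd binomial coefficients are $\binom{2^m}{0}$ and $\binom{2^m}{2^m}$, so at each such row the path sits at an endpoint. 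Since both the column index $a_n-1$ and its complement $n-a_n$ are nondecreasing, if the path is at the left endpoint at one row $2^m$ it cannot reach the right endpoint of row $2^{m+1}$ (too few steps), so it stays at the left endpoint at all powers of two and hence everywhere; symmetrically for the right endpoint. This yields either $a_n\equiv1$ (the ray $(1^n)$) or $a_n=n$ (the ray $(n)$).

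I expect the main obstacle to be the reduction to hooks rather than the final Pascal computation. A direct attack on a general ray quickly drowns in the case analysis of the Main Lemma, and the decisive simplification is the remark that an immortal ray is forced through a hook at every size $2^k-1$ and can therefore never acquire a cell south-east of the corner; verifying this cleanly, via Theorem~\ref{theorem:children} together with the monotone-containment argument, is the heart of the proof. Once one is confined to hooks, the remaining assertion is the familiar fact that the Sierpi\'nski gasket has only its two outer edges as infinite rays, which the row-$2^m$ argument settles.
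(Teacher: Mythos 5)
Your proof is correct, and it reorganizes the argument in a genuinely different way, although both proofs turn on the same key consequence of Theorem~\ref{theorem:children}: a partition of $2^k-1$ that is not a hook has no odd children (since it equals its own $2^k$-core). You use this \emph{backwards} along the ray: the ray must be a hook at every rank $2^k-1$, and since any diagram contained in a hook is a hook, \emph{every} element of the ray is a hook; this collapses the problem into Pascal's triangle, where the no-carry criterion at rows $2^m$ together with your step-counting argument (the arm and leg of a hook are both nondecreasing along the ray, and only $2^m$ steps separate row $2^m$ from row $2^{m+1}$) leaves only the two boundary paths. The paper instead works \emph{forwards}: the only odd hooks of size $2^k+1$ are $(2^k+1)$ and $(1^{2^k+1})$, so an odd partition of $2^k$ other than $(2^k)$ or $(1^{2^k})$ has only non-hook odd children; non-hookness persists under adding cells, so all of its odd descendants of size $2^{k+1}-1$ are non-hooks and die by Theorem~\ref{theorem:children}. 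Hence every ray passes through $(2^k)$ or $(1^{2^k})$ for all $k$, and containment (any partition between $(2^k)$ and $(2^{k+1})$ is a single row) finishes. Your route buys a clean intermediate statement --- all ray elements are hooks --- and makes the connection with the Sierpi\'nski picture of Section~\ref{sec:pascal} explicit; the paper's route is shorter because it never translates into lattice paths, using Young-diagram containment where you count steps. Indeed, your ``too few steps'' obstruction is precisely the containment obstruction $(1^{2^m+1})\not\subseteq(2^{m+1}+1)$ in disguise, so the two endgames are equivalent, but your decomposition (hooks first, then Pascal) is your own.
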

\begin{proof}
  The only hooks in $\Lambda^{\mathrm{odd}}_{2^k+1}$ are $(2^k+1)$ and $(1^{2^k+1})$.
  So if $\lambda\in \Lambda^{\mathrm{odd}}_{2^k}$ is different from $(2^k)$ or $(1^{2^k})$, then $\lambda^+\cap \Lambda^{\mathrm{odd}}_{2^k+1}$ has no hooks in it.
  It follows that none of the partitions of $2^{k+1}-1$ in $\lambda^{+[0, 2^k-1]}$ are hooks, and so, by Theorem~\ref{theorem:children}, have no children.
  Thus $\lambda$ cannot be contained in any ray.
  Thus each ray in the Macdonald tree must pass through the points $(2^k)$ or $(1^{2^k})$ for each $k$, and so must be either $\{(n)\}_{n=1}^\infty$ or $\{(1^n)\}_{n=1}^\infty$, as claimed.
\end{proof}

\section{Recursive description of the Macdonald tree}
\label{sec:recursive}
\begin{figure}[h]
  \centering
  \includegraphics[scale = 0.62]{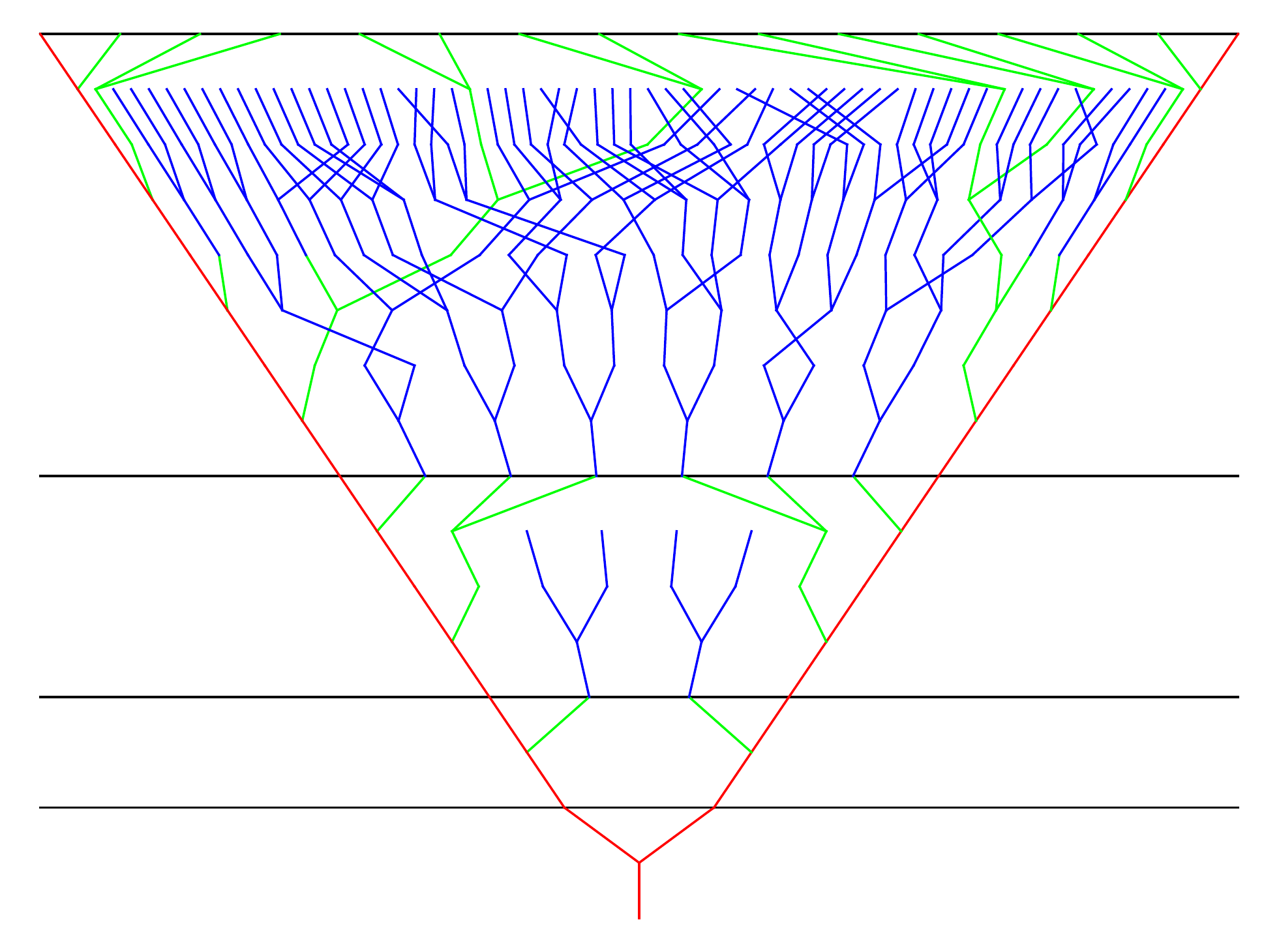}
  \caption{Hooks (joined by red and green edges) in the Macdonald tree}
  \label{fig:macdonald_tree}
\end{figure}
The Macdonald tree, when viewed as an abstract rooted binary tree, has a very short recursive description.
Let $T_k = \emptyset^{+[0, 2^k-1]}$, the subtree formed by nodes which are at distance at most $2^k-1$ from the root.
The recursive description allows for the construction of $T_{k+1}$ from $T_k$.

As a first step, construct a tree $\tilde T_k$ by adding one node $*$ to the root of $T_k$ (thus $\tilde T_k$ may be viewed as a rooted tree with root $*$).
Note that, for $k\geq 2$, $T_k$ has $2^{\binom k2}$ partitions of $2^k-1$, of which $2^{k-1}$ are hooks.
Among these hooks, there are two partitions $\lambda$ with $f_\lambda = 1$ (we will call these one-dimensional partitions).

\begin{quote}
  For $k\geq 2$, the tree $T_{k+1}$ is obtained from $T_k$ by attaching two branches to each partition of $2^k-1$ in $T_k$ that is a hook, each of these branches being isomorphic to $\tilde T_k$.
\end{quote}

Thus the nodes of $T_{k+1}$ can be partitioned into $2^k + 1$ subsets, in such a way that the induced subgraph on each of these subsets is isomorphic to $T_k$.
In order to be able to extend the recursive process further, we need to mark the hooks and one-dimensional partitions of $2^{k+1}-1$ in $T_{k+1}$.
To mark the hooks, choose one branch descending from each of the one-dimensional partitions in $T_k$.
This branch is, by construction, isomorphic to a copy of $T_k$.
The partitions in this branch which correspond to hooks of $T_k$ are the partitions of $2^{k+1}-1$ in $T_{k+1}$ which are hooks.
To mark partitions of $2^{k+1}-1$ which are one-dimensional, simply choose one of the one-dimensional partitions of $2^k-1$ in each branch.

These markings, of course, are only defined up to an automorphism of $T_{k+1}$ (which is not a problem, because we are only describing the Macdonald tree as an abstract binary tree).
The Macdonald tree up to partitions of $16$ is shown in Figure~\ref{fig:macdonald_tree}.
The green and red edges are the ones which join hooks.
Among these, the red edges join one dimensional partitions.
All remaining edges are coloured blue.
The horizontal lines mark powers of two.

\section{Hooks and Pascal's triangle}
\label{sec:pascal}
Among all partitions, consider the hooks, which are partitions of the form $(n_1+1, 1^{n_2})$ for nonnegative integers $n_1$ and $n_2$.
Pascal's triangle may be regarded as the Hasse diagram of the poset of pairs of non-negative integers, with $(n_1,n_2)\leq (m_1,m_2)$ if and only if $n_1\leq m_1$ and $n_2\leq m_2$.
The unique minimal element of this poset is $(0, 0)$, and it is graded with $(n_1,n_2)$ having rank $n_1 + n_2$.
In this poset, $(n_1, n_2)$ is covered by two elements, $(n_1+1, n_2)$ and $(n_1, n_2+1)$.
The number of saturated chains from $(0, 0)$ to $(n_1,n_2)$ is the binomial coefficient $\binom{n_1 + n_2}{n_1}$.
The subgraph of Pascal's triangle consisting of pairs $(n_1, n_2)$, where $\binom{n_1+n_2}{n_1}$ is odd  (see Figure~\ref{fig:sierpinski}) is closely related to the Sierpi\'nski triangle \cite{reiter1993determining}.
\begin{figure}[h]
  \centering
  \includegraphics[scale = 0.5]{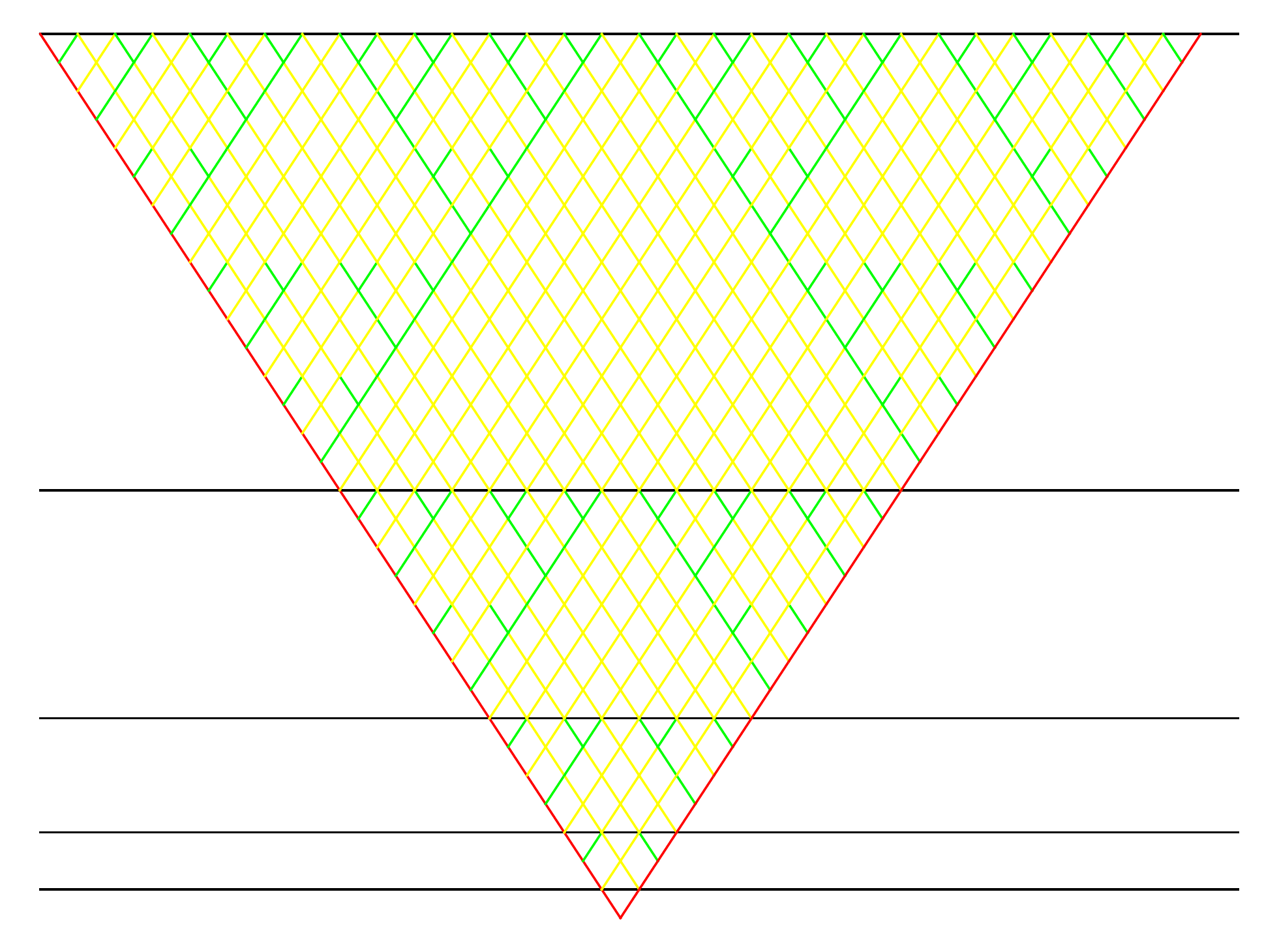}
  \caption{Odd binomial coefficients (joined by green and red edges) in Pascal's triangle up to $n=32$}
  \label{fig:sierpinski}
\end{figure}

The map $(n_1,n_2)\mapsto (n_1+1, 1^{n_2})$ is an embedding of Pascal's triangle into Young's lattice (with a shift of one in rank).
The image, consisting of all the hooks in Young's lattice, taken together with the empty partition $\emptyset$, is an order ideal in $\Lambda$, which we will denote by $P$.
Thus if $\lambda\in \Lambda$ is a hook, then every saturated chain from $\emptyset$ to $\lambda$ in $\Lambda$ is also a saturated chain in $P$.
This gives an amusing proof of the well-known formula:
\begin{displaymath}
  f_{(n_1+1, 1^{n_2})} = \binom{n_1+n_2}{n_1}.
\end{displaymath}
It follows that the subgraph induced in Young's graph by the set of odd-dimensional hooks is isomorphic to the graph of odd binomial coefficients in Pascal's triangle.
These are the green and red edges in Figures~\ref{fig:macdonald_tree} and ~\ref{fig:sierpinski}.

\section{Proof of Lemma~\ref{lemma:unique_hook}}
\label{sec:append-result-from}
Our proof of Lemma~\ref{lemma:unique_hook} relies on the following result of Frame, Robinson, and Thrall \cite[Lemma~2]{frame1954hook}.
\begin{lemma}
  \label{sec:macd-lemma-1}
  There exists a bijection from the set of cells in $\quot\lambda p$ onto the set of cells in $\lambda$ whose hook-lengths are divisible by $p$
  under which a cell of hook-length $h$ in $\quot\lambda p$ maps to a cell of hook-length $p h$ in $\lambda$.
\end{lemma}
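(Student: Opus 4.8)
The plan is to pass to the abacus (Maya diagram) encoding of partitions, in which both the hook-length statistic and the $p$-quotient admit transparent descriptions, and then to observe that the asserted bijection is essentially the tautology $p \mid (j-i) \iff i \equiv j \pmod p$. First I would encode $\lambda$ by its \emph{edge sequence}: reading the boundary path of the Young diagram from the bottom-left toward the top-right and recording a $1$ for each vertical (North) step and a $0$ for each horizontal (East) step yields a doubly infinite binary word $w = (w_i)_{i \in \mathbb{Z}}$, normalised so that $w_i = 1$ for all sufficiently negative $i$ and $w_i = 0$ for all sufficiently positive $i$, and fixing the origin so that the number of $0$'s at negative indices equals the number of $1$'s at nonnegative indices (the balanced, or $0$-charge, normalisation).

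Next I would establish the classical encoding facts. The cells of $\lambda$ are in bijection with the \emph{inversions} of $w$, that is, the pairs $i < j$ with $w_i = 1$ and $w_j = 0$; under this bijection the cell attached to $(i,j)$ has hook-length exactly $j - i$. (The vertical step at $i$ determines the row and the horizontal step at $j$ the column of the cell; the $0$'s strictly between $i$ and $j$ count the arm and the $1$'s count the leg, so the hook-length is arm plus leg plus one, namely $j - i$.) I would then recall the abacus description of the quotient: distribute $\mathbb{Z}$ into the $p$ residue classes modulo $p$, and for each $t \in \{0,\dots,p-1\}$ let $w^{(t)}$ be the subword of $w$ supported on the positions $\equiv t \pmod p$, reindexed by $\mathbb{Z}$ via $t + pa \mapsto a$. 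Each $w^{(t)}$ is again a Maya diagram (with some charge) and so encodes a partition $\mu^t$, and the family $(\mu^0,\dots,\mu^{p-1})$ is precisely $\quot\lambda p$, while left-justifying the $1$'s on each runner recovers $\core\lambda p$.

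The bijection then falls out immediately. A cell of $\lambda$ given by an inversion $(i,j)$ has hook-length $j - i$ divisible by $p$ if and only if $i \equiv j \pmod p$, i.e.\ $i = t + pa$ and $j = t + pb$ lie on a common runner $t$ with $a < b$. The relations $w_i = 1$ and $w_j = 0$ translate verbatim into $w^{(t)}_a = 1$ and $w^{(t)}_b = 0$, so $(a,b)$ is an inversion of $w^{(t)}$ and hence names a cell of $\mu^t$, whose hook-length is $b - a = (j-i)/p$. This assignment sends the set of cells of $\lambda$ with $p$-divisible hook-length onto the disjoint union of the cell sets of the $\mu^t$, that is, onto the cells of $\quot\lambda p$, and it multiplies every hook-length by $p$; its inverse is the evident reverse substitution, so it is a bijection with the stated hook-length scaling.

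The main obstacle will be purely bookkeeping: pinning down a single convention and checking the two encoding lemmas against it, namely that cells correspond to $(1,0)$-inversions with hook-length $j-i$, and that the residue-class subwords $w^{(t)}$ reproduce the $p$-quotient exactly as defined via cores and quotients in \cite[Section~2.7]{2009JamesKerber}. Once the conventions are fixed these are routine, and the divisibility equivalence does the rest; I expect the fiddliest point to be the charge normalisation of the Maya diagram, since one must verify that splitting into runners does not introduce a rank shift that would spoil the identification of cells with inversions on each runner.
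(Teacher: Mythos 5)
The paper never proves this lemma: it is quoted verbatim as Lemma~2 of Frame, Robinson and Thrall \cite{frame1954hook}, so there is no internal proof to compare against. Your abacus argument is the standard modern proof of that classical fact, and its skeleton is right: cells of $\lambda$ correspond to inversions of the edge sequence with hook-length equal to the difference of positions, divisibility of the hook-length by $p$ means the two positions lie on a common runner, and the runner subwords encode the components of $\quot\lambda p$, whence the bijection and the scaling of hook-lengths by $p$.

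One concrete slip needs fixing. With your normalization (vertical steps recorded as $1$, so $w_i=1$ for $i\ll 0$ and $w_i=0$ for $i\gg 0$), the cells of $\lambda$ correspond to pairs $i<j$ with $w_i=0$ and $w_j=1$, \emph{not} $w_i=1$ and $w_j=0$: as written there are infinitely many pairs of the latter kind (any $1$ far to the left paired with any $0$ far to the right), so your stated encoding fact is false. Your own parenthetical shows the correct picture and reveals the transposition: the vertical step, which determines the row, sits at the \emph{larger} index, and the horizontal step, which determines the column, at the \emph{smaller} index; the $0$'s strictly between them count the arm and the $1$'s the leg. With the roles of $i$ and $j$ swapped, everything else goes through verbatim: $p\mid(j-i)$ iff $i\equiv j\pmod p$, the relations $w_i=0$, $w_j=1$ descend to the runner word $w^{(t)}$, and hook-lengths divide by $p$. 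Also, the charge normalization you flag as the fiddliest point is in fact a non-issue for this lemma: the correspondence between cells and such inversions, and the hook-length $j-i$, are invariant under translating the indexing, so no rank shift on the runners can spoil the identification. The choice of origin only matters if one insists on the components of $\quot\lambda p$ appearing in the exact order fixed by \cite[Section~2.7]{2009JamesKerber}, and the statement of the lemma, which concerns only the set of cells of the quotient, is insensitive to that ordering.
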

  For a partition $\lambda$ of $n$, let $\alpha = \core\lambda 2$ and $(\mu^0,\mu^1) = \quot\lambda 2$.
  Let $a = |\alpha|$, $m_0 = |\mu^0|$, and $m_1 = |\mu^1|$ (so $n = a + 2m_0 + 2m_1$).
\begin{lemma}
  \label{sec:recurse-f}
  The partition $\lambda$ is odd if and only if $a\leq 1$ (so $\alpha$ is $\emptyset$ or $(1)$), the sets of place values where $1$ appears in the binary expansions of $a$, $2m_0$ and $2m_1$ are disjoint, and $\mu^0$ and $\mu^1$ are odd.
\end{lemma}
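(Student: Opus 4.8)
The plan is to compute the $2$-adic valuation $v_2(f_\lambda)$ directly from the hook-length formula and to show that it splits as a sum of four manifestly nonnegative integer contributions, each of which vanishes precisely under one of the stated conditions; then $\lambda$ is odd (that is, $v_2(f_\lambda)=0$) if and only if all four vanish simultaneously.

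First I would write $f_\lambda = n!/\prod_c h(c)$, the product running over the cells $c$ of $\lambda$, so that
\begin{displaymath}
  v_2(f_\lambda) = v_2(n!) - \sum_{c} v_2(h(c)).
\end{displaymath}
Only cells with even hook-length contribute. By Lemma~\ref{sec:macd-lemma-1} applied with $p=2$, these are in bijection with the cells of $\mu^0$ and $\mu^1$, a cell of hook-length $h$ in the quotient corresponding to a cell of hook-length $2h$ in $\lambda$. Hence $\sum_c v_2(h(c)) = (m_0+m_1) + \sum_{c'} v_2(h(c'))$, the last sum being over all cells $c'$ of $\mu^0$ and $\mu^1$. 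Applying the hook-length formula again to $\mu^0$ and $\mu^1$ gives $\sum_{c'} v_2(h(c')) = v_2(m_0!) + v_2(m_1!) - v_2(f_{\mu^0}) - v_2(f_{\mu^1})$.

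Next I would substitute Legendre's formula $v_2(m!) = m - \nu(m)$, where $\nu(m)$ denotes the number of ones in the binary expansion of $m$, together with $n = a + 2m_0 + 2m_1$ and the identity $\nu(2m_i) = \nu(m_i)$. The terms linear in the $m_i$ cancel, and after regrouping (splitting $a = (a-\nu(a)) + \nu(a)$) one obtains
\begin{displaymath}
  v_2(f_\lambda) = \bigl[a - \nu(a)\bigr] + \bigl[\nu(a) + \nu(m_0) + \nu(m_1) - \nu(n)\bigr] + v_2(f_{\mu^0}) + v_2(f_{\mu^1}).
\end{displaymath}
Finally I would check that each of the four summands is a nonnegative integer. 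The term $a - \nu(a) = v_2(a!)$ is nonnegative and vanishes exactly when $a\leq 1$. The term $\nu(a) + \nu(m_0) + \nu(m_1) - \nu(n)$ equals $\nu(a) + \nu(2m_0) + \nu(2m_1) - \nu(a + 2m_0 + 2m_1)$, which is nonnegative by subadditivity of $\nu$ and vanishes exactly when the binary addition $a + 2m_0 + 2m_1$ is carry-free, i.e. when the place values carrying a $1$ in $a$, $2m_0$, $2m_1$ are pairwise disjoint. The last two terms are nonnegative and vanish exactly when $\mu^0$ and $\mu^1$ are odd. Since a sum of nonnegative integers is zero iff each summand is zero, this yields the claimed equivalence.

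The main obstacle is the bookkeeping in the valuation computation: extracting the factor $m_0 + m_1$ from the doubling of hook-lengths, correctly re-applying the hook-length formula to the quotient partitions, and engineering the cancellation so that the result regroups into the four nonnegative pieces. Once the identity for $v_2(f_\lambda)$ is in hand, the equivalence is immediate, the only auxiliary input being the standard fact that $\nu(x+y)\leq \nu(x)+\nu(y)$ with equality if and only if the addition is carry-free.
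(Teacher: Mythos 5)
Your proof is correct and takes essentially the same route as the paper: both extract $v_2(f_\lambda)$ from the hook-length formula, use the Frame--Robinson--Thrall bijection (Lemma~\ref{sec:macd-lemma-1}) to account for the even hook-lengths via the $2$-quotient, apply Legendre's formula, and regroup into the same four nonnegative summands whose simultaneous vanishing gives the stated conditions. The only cosmetic difference is that you write $\nu(m_i)$ where the paper writes $\nu(2m_i)$, which are equal.
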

\begin{proof}
  By the hook-length formula \cite[Theorem~1]{frame1954hook}, we have
  \begin{equation}
    \label{eq:3}
    v_2(f_\lambda) = v_2(n!) - v_2(H_\lambda),
  \end{equation}
  where $H_\lambda$ is the product of all the hook-lengths of $\lambda$.
  By a well-known theorem of Legendre, $v_2(n!) = n - \nu(n)$, where $\nu(n)$ is the number of times $1$ occurs in the binary expansion of $n$.
  Moreover, by Lemma~\ref{sec:macd-lemma-1}, $v_2(H_\lambda) = m_0 + m_1 + v_2(H_{\mu^0}) + v_2(H_{\mu^1})$.
  Using these facts, the identity \eqref{eq:3} can be rewritten as
  \begin{displaymath}
    v_2(f_\lambda) = [a - \nu(a)] + [\nu(a) + \nu(2m_0) + \nu(2m_1) - \nu(n)] +  v_2(f_{\mu^0}) + v_2(f_{\mu^1}).
  \end{displaymath}
  The right-hand side is a sum of four nonnegative parts.
  It  is zero if and only if each part is zero.
  This happens only under the conditions of the lemma.
\end{proof}
\begin{proof}
  [Proof of Lemma~\ref{lemma:unique_hook}]
  Suppose that $\lambda$ is an odd partition of $n$ with $n>1$.
  Choose $k$ such that $2^k\leq n<2^{k+1}$.
  By Lemma~\ref{sec:recurse-f}, both $f_{\mu^0}$ and $f_{\mu^1}$ are odd, and exactly one of $m_0$ and $m_1$ satisfies $2^{k-1}\leq m_i<2^k$.
  By induction, this $\mu^i$ has a unique $2^{k-1}$-hook, while the other has none.
  By Lemma~\ref{sec:macd-lemma-1}, $\lambda$ has a unique $2^k$-hook.

  Moreover, by \cite [Theorem~4]{frame1954hook}, we have
  \begin{align}
    \label{eq:1}
    \core{\core\lambda{2^k}}{2} & = \alpha,\\
    \label{eq:2}
    \quot{\core\lambda{2^k}}{2} & = (\core{\mu^0}{2^{k-1}}, \core{\mu^1}{2^{k-1}}).
  \end{align}
  If $2^{k-1}\leq m_i< 2^k$, then $f_{\core{\mu^i}{2^{k-1}}}$ is odd by induction.
  Otherwise, $m_i<2^{k-1}$, so again $\core{\mu^i}{2^{k-1}} = \mu^i$, so $\core{\mu^i}{2^{k-1}}$ is odd.
  Thus $\core\lambda{2^k}$ is odd by Lemma~\ref{sec:recurse-f}.

  For the converse, suppose that $\lambda$ has a unique $2^k$-hook, and that $\core\lambda{2^k}$ is odd.
  The first condition implies that $|\core\lambda{2^k}| = n - 2^k$.
  Let $m'_i = |\core{\mu^i}{2^{k-1}}|$.
  We have
  \begin{displaymath}
    n - 2^k = a + 2m'_1 + 2m'_2,
  \end{displaymath}
  and the place values of the $1$s in the binary expansion of the summands on the right-hand side are disjoint.
  Since $2^{k-1}\leq m_i$ for exactly one $i$, \eqref{eq:2} implies that $m'_i = m_i$ for exactly one $i$ (say, $i=0$), and thus, for the other value of $i$ (say $i=1$), $m'_i = m_i - 2^{k-1}$.
  Thus $\mu^0$ has a unique $2^k$-hook and (by Lemma~\ref{sec:recurse-f} applied to $\core\lambda{2^k}$) $f_{\core{\mu^0}{2^{k-1}}}$ is odd.
  Thus, by induction on $k$, $f_{\mu^0}$ is odd.
  Since $m_1<2^{k-1}$, $\mu^1 = \core{\mu^1}{2^{k-1}}$, so (again by Lemma~\ref{sec:recurse-f} applied to $\core\lambda{2^k}$) $f_{\mu^1}$ is odd.
  Finally, the application of Lemma~\ref{sec:recurse-f} to $\lambda$ shows that $f_\lambda$ is odd.

  Since the odd partitions of $n$ have a unique $2^k$-hook, the partition $(1)$ occurs in their $2^k$-quotients once, and the partition $\emptyset$ occurs $2^k-1$ times.
  Thus there are $2^k$ possibilities for the $2^k$-quotient of such a partition once its $2^k$-core is fixed.
  Since a partition is determined by its core and quotient, the second assertion of the lemma follows.
\end{proof}

\section{Concluding remarks}
\label{sec:concluding-remarks}
In this article, we have described how Macdonald's enumerative results on odd partitions are reflected in Young's lattice.
The enumerative result in Macdonald's paper \cite{macdonald1971degrees} is a simple special case of his more general result on the enumeration of partitions $\lambda$ for which $f_\lambda$ is not divisible by a prime number $p$.
It would be interesting to see how these more general enumerative results are reflected in Young's lattice.
However, this can not be achieved by using only the methods here.
For instance, Lemma~\ref{lemma:unique_hook} does not hold for $p>2$.
Also, the partitions $\lambda$ with $f_\lambda$ coprime to $3$ do not form a tree (the partitions $(2)$ and $(1,1)$ both cover $(2,1)$; all three partitions have dimension coprime to $3$).

Another promising direction of generalization is to replace Young's lattice by an arbitrary $1$-differential poset.
Besides Young's lattice, the best-known example of a $1$-differential poset is the Young--Fibonacci lattice (see \cite[Section~5]{sta-diffpo}), denoted $Z(1)$.
For each $x\in Z(1)$, let $f_x$ denote the number of saturated chains in $[\hat 0, x]$.
Using the construction of $Z(1)$ by reflection extension \cite[Section~2.2]{roby-thesis}, it is easy to prove the following.
\begin{theorem}
  The subgraph induced in the Hasse diagram of $Z(1)$ by the set of elements $x\in Z(1)$ for which $f_x$ is odd is a binary tree where every element of even rank has one branch and every element of odd rank has two branches.
\end{theorem}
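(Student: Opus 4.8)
The plan is to use the standard encoding of $Z(1)$ by words $w$ in the alphabet $\{1,2\}$, with rank $|w|$ equal to the sum of the letters, and to extract from the reflection-extension construction the recursive shape of the covering relations. That construction presents rank $n$ as the disjoint union $1\cdot Z(1)_{n-1}\sqcup 2\cdot Z(1)_{n-2}$, and from it one reads off that the only element covered by $1w'$ is $w'$, so that $f_{1w'}=f_{w'}$, while the elements covered by $2w''$ are $1w''$ together with the words $2u$ for $u\in (w'')^-$. Dually, the elements covering $w$ are $1w$ together with the single word $2w'$ when $w=1w'$, and together with the words $2z$ for $z\in (w'')^+$ when $w=2w''$. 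I would record these facts first, since the whole argument is then parity bookkeeping on top of them, exactly paralleling the way the paper reduces its Young's-lattice statements to the behaviour of $\mathrm{core}_{2^k}$.

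The heart of the matter, and the step I expect to be the main obstacle, is a single parity lemma: $f_{2w''}$ is odd if and only if $|w''|$ is even and $f_{w''}$ is odd. I would prove it by strong induction on $|w''|$, using the recursion $f_{2w''}=f_{w''}+\sum_{u\in (w'')^-}f_{2u}$ supplied by the covers above, together with the chain recursion $f_{w''}=\sum_{u\in (w'')^-}f_u$. When $|w''|$ is even, every $u\in (w'')^-$ has odd rank, so each $f_{2u}$ is even by induction and $f_{2w''}\equiv f_{w''}$; when $|w''|$ is odd, every such $u$ has even rank, so $f_{2u}\equiv f_u$ by induction and $f_{2w''}\equiv f_{w''}+\sum_u f_u=2f_{w''}\equiv 0$. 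The delicate point is that the two regimes must be carried through one interlocking induction, so that the ``odd rank annihilates'' and ``even rank transmits'' phenomena reinforce each other rather than circularly depend on an unproven case.

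Combining the lemma with $f_{1w'}=f_{w'}$ gives a recursive parity test for $f_w$; in particular every odd word of odd rank must begin with a $1$. The theorem then falls out. For the tree (unique-parent) statement: if $w=1w'$ its only lower cover is $w'$, which carries the same odd $f$; if $w=2w''$ is odd then $|w''|$ is even, so the lower covers $2u$ with $u\in (w'')^-$ have $|u|$ odd and hence even $f$ by the lemma, leaving $1w''$ as the unique odd element of $w^-$. For the branching count I would observe that $1w$ is always an odd child because $f_{1w}=f_w$, and then examine the one remaining family of covers: when $w=1w'$ the extra upper cover is the promotion $2w'$, which the lemma makes odd exactly when $|w'|$ is even, i.e.\ when $|w|$ is odd; when $w=2w''$ is odd, forcing $|w|$ even, every remaining upper cover $2z$ has $|z|=|w''|+1$ odd and is therefore even. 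Hence an odd word of even rank has the single odd child $1w$, while an odd word of odd rank has exactly the two odd children $1w$ and $2w'$, which is the claimed binary-tree structure; as a check one recovers $2^{\lfloor n/2\rfloor}$ odd elements in rank $n$, the Young--Fibonacci analogue of Macdonald's count.
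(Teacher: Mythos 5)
Your proposal is correct, and in fact it supplies a proof where the paper gives none: the paper merely asserts that the theorem ``is easy to prove'' from the reflection-extension construction of $Z(1)$ and cites Roby's thesis, so there is no written argument to compare against. What you do is exactly a concrete implementation of that route. Your word-model covering relations are the reflection-extension structure in disguise: the element $1w$ is the new cover added above $w$ (so $f_{1w}=f_w$), and $2w$ is the reflected clone of $w$, whose lower covers are precisely the upper covers of $w$, namely $1w$ together with $2u$ for $u\in w^-$; this is the one step you state without proof, and it is routine to verify from either Roby's construction or Stanley's description of $Z(1)$ (change the leftmost $1$ to a $2$, or insert a $1$ having only $2$'s to its left). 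Given those relations, your parity lemma --- $f_{2w}$ odd if and only if $|w|$ is even and $f_w$ is odd, proved by the single interlocking induction $f_{2w}\equiv f_w$ for $|w|$ even and $f_{2w}\equiv 0$ for $|w|$ odd --- is exactly the right engine, and I checked that both regimes of the induction close up correctly (the empty word serving as base case). The deductions of unique odd parent, of the fact that odd elements of odd rank must begin with a $1$, and of the branching counts (one odd child $1w$ in even rank; two odd children $1w$ and $2w'$ in odd rank) all follow correctly, and your count $2^{\lfloor n/2\rfloor}$ recovers the paper's corollary. In short: no gap, and your argument is a legitimate filling-in of the proof the paper leaves to the reader, playing the same role that $\mathrm{core}_{2^k}$ plays in the Young's-lattice half of the paper.
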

The following analogue of Macdonald's enumerative result is an immediate corollary.
\begin{theorem}
  The number of elements $x$ in $Z(1)$ of rank $n$ with $f_x$ odd is $2^{\lfloor n/2\rfloor}$.
\end{theorem}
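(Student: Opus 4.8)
The plan is to set up a simple recursion on the number of odd-dimensional elements at each rank, reading off the branching data directly from the preceding theorem. Write $c_n$ for the number of $x\in Z(1)$ of rank $n$ with $f_x$ odd. Since the subgraph in question is a tree rooted at the unique rank-$0$ element $\hat 0$ (which satisfies $f_{\hat 0}=1$, and so is odd), and since the Hasse diagram of $Z(1)$ is graded by rank, every odd element of rank $n+1$ is joined to a unique odd element of rank $n$, namely its parent in the tree. Consequently $c_{n+1}$ equals the total number of children, summed over all odd elements of rank $n$.

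First I would invoke the branching rule from the previous theorem: an element of even rank has one child and an element of odd rank has two. This immediately yields the recursion
\begin{displaymath}
  c_{n+1} =
  \begin{cases}
    c_n, & \text{if $n$ is even,}\\
    2c_n, & \text{if $n$ is odd,}
  \end{cases}
\end{displaymath}
with initial condition $c_0 = 1$.

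Then I would solve this recursion by induction on $n$. Grouping consecutive ranks, passing from rank $2m$ to rank $2m+2$ traverses exactly one even and one odd rank and so multiplies the count by $2$, giving $c_{2m} = 2^m$; since the step from an even rank to the next odd rank leaves the count unchanged, $c_{2m+1} = c_{2m} = 2^m$. In both cases $c_n = 2^{\lfloor n/2\rfloor}$, as claimed.

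As for the main obstacle: essentially all the substance is already packaged into the preceding theorem, so the only point requiring genuine care is the justification that the odd elements of rank $n+1$ are precisely the children of the odd elements of rank $n$. This rests on the tree being rooted at $\hat 0$ together with the covering relation of $Z(1)$ respecting the rank function, both of which are implicit in the statement of the previous theorem. Once these are noted, the child-counting step is unambiguous and the remainder of the argument is routine.
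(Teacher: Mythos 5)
Your proposal is correct and takes the same route as the paper: the paper states this count as an immediate corollary of the preceding branching theorem, and your recursion $c_{n+1}=c_n$ (for $n$ even) and $c_{n+1}=2c_n$ (for $n$ odd), with $c_0=1$, is exactly that deduction written out, including the justification via the rooted-tree structure that each odd element of rank $n+1$ has a unique odd parent of rank $n$.
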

For the Fibonacci $r$-differential poset with $r>1$, the subgraph induced in its Hasse diagram by the subset of elements $x$ with an odd number of saturated
chains in $[0, x]$ is a rooted tree if and only if $r$ is even. In this tree
every node has $r$ branches.

\subsection*{Acknowledgements}
This research was driven by computer exploration using the open-source
mathematical software \texttt{Sage}~\cite{sage} and its algebraic
combinatorics features developed by the \texttt{Sage-Combinat}
community~\cite{Sage-Combinat}.
A. A. was supported in part by a UGC Centre for Advanced Study grant.
The authors thank an anonymous referee for some helpful comments.

\end{document}